\theoremstyle{plain}
\newtheorem{theorem}{Theorem}[section]
\newtheorem{proposition}[theorem]{Proposition}
\newtheorem{conjecture}[theorem]{Conjecture}
\theoremstyle{definition}
\newtheorem{definition}[theorem]{Definition}
\newcommand{\ex}{{\rm{ex}}}
\newcommand{\Mod}[1]{\ (\mathrm{mod}\ #1)}
\newcommand{\vb}[1]{\boldsymbol{#1}}
\begin{document}

\begin{frontmatter}[classification=text]

\title{Tight paths  in  convex geometric hypergraphs}

\author[furedi]{ Zolt\'an F\" uredi\thanks{Supported by grant K116769
		from the National Research, Development and Innovation Office NKFIH and
		by the Simons Foundation Collaboration grant 317487.}}
\author[jiang]{	Tao Jiang\thanks{Supported by National Science Foundation award DMS-1400249.}}
\author[kostochka]{Alexandr Kostochka\thanks{Supported by NSF grant DMS1600592,
		by Arnold O. Beckman Award RB20003 of the University of Illinois at Urbana-Champaign and by grants 18-01-00353A and 19-01-00682 of the Russian Foundation for Basic Research.}}
	\author[mubayi]{Dhruv Mubayi\thanks{Supported by NSF grants  DMS-1300138 and DMS-1763317.}}
\author[verstraete]{Jacques Verstra\"ete\thanks{Supported by NSF grants DMS-1556524 and DMS-1800332.}
}

\begin{abstract}
	In this paper, we prove a theorem on tight paths in convex geometric hypergraphs, which is asymptotically sharp in infinitely many cases. Our geometric theorem is a common generalization of early results of Hopf and Pannwitz~\cite{Hopf-Pannwitz}, Sutherland~\cite{Sutherland}, Kupitz and Perles~\cite{Kupitz-Perles} for convex geometric graphs, as well as the classical Erd\H{o}s-Gallai Theorem~\cite{Erdos-Gallai} for graphs. As a consequence, we obtain the first substantial improvement on the Tur\'{a}n problem for tight paths in uniform hypergraphs.
\end{abstract}
\end{frontmatter}

\section{Introduction}

In this paper, we address extremal questions for tight paths in uniform hypergraphs and in convex geometric hypergraphs.
For $k \geq 1$ and $r \geq 2$, a {\em tight $k$-path} is an $r$-uniform hypergraph (or simply $r$-graph) $P_k^r = \{v_iv_{i+1}\dots v_{i+r-1} : 0 \leq i < k\}$. Let $\ex(n,P_k^r)$ denote the maximum number of edges in an $n$-vertex
$r$-graph not containing a tight $k$-path. It appears to be difficult to determine $\ex(n,P_k^r)$ in general, and even the asymptotics as $n \rightarrow \infty$ are not known.
The following is a special case of a conjecture of Kalai~\cite{FF} on tight trees, generalizing the well-known Erd\H{o}s-S\'{o}s Conjecture~\cite{Erdos-Sos-Tree}:

\begin{conjecture} [Kalai] \label{kalaiconj}
	For $n \geq r \geq 2$ and $k \geq 1$, $\ex(n,P_k^r) \leq \frac{k-1}{r}{n \choose r - 1}$.
\end{conjecture}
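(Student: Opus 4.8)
The plan is to prove the bound by induction on $n$, reducing everything to a minimum‑degree (``Dirac‑type'') statement for tight paths, which I expect to be the real content. \emph{Base case and easy reductions.} If $n\le k+r-2$ there is no room for $P_k^r$, so we only need $e(H)\le\binom nr$; since $\binom nr=\frac{n-r+1}{r}\binom{n}{r-1}\le\frac{k-1}{r}\binom{n}{r-1}$ precisely when $n\le k+r-2$, the base case follows (with equality exactly for $K^{(r)}_{k+r-2}$, the building block of the extremal configurations). Now let $H$ be $P_k^r$-free on $n\ge k+r-1$ vertices. If $H$ is disconnected with components $V_1,\dots,V_c$, apply induction to each $H[V_j]$ and use superadditivity, $\sum_j\binom{|V_j|}{r-1}\le\binom{n}{r-1}$. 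If some vertex $v$ has degree at most $\frac{k-1}{r}\binom{n-1}{r-2}$, delete it: $H-v$ is $P_k^r$-free on $n-1$ vertices, so by induction and Pascal's identity $\binom{n}{r-1}=\binom{n-1}{r-1}+\binom{n-1}{r-2}$ we get $e(H)=e(H-v)+d_H(v)\le\frac{k-1}{r}\binom{n}{r-1}$. Thus it remains to handle $H$ connected with minimum degree exceeding $\frac{k-1}{r}\binom{n-1}{r-2}$ and $n\ge k+r-1$, and there it suffices to reach a contradiction by producing a copy of $P_k^r$.

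\emph{The core.} I would prove the following Dirac‑type theorem: a connected $r$-graph on $n\ge k+r-1$ vertices with minimum degree exceeding $\frac{k-1}{r}\binom{n-1}{r-2}$ contains a tight $k$-path. Take a longest tight path $Q=u_0u_1\cdots u_m$; if $m\ge k+r-2$ we are done, so suppose $m\le k+r-3$. By maximality, no vertex outside $Q$ extends either end, so the end $(r-1)$-sets $\{u_0,\dots,u_{r-2}\}$ and $\{u_{m-r+2},\dots,u_m\}$ each have codegree at most $(m+1)-(r-1)\le k-1$. The aim is then a ``tight rotation'' step: rerouting $Q$ from its endpoint $(r-1)$-sets along internal edges should either yield a strictly longer tight path (contradiction) or close $Q$ into a spanning tight cycle of its tight‑component, forcing that component onto at most $k+r-2$ vertices and returning us to the base case; the minimum‑degree hypothesis enters exactly as in the classical Erdős--Gallai endpoint argument for $r=2$ (there one shows $\deg u_0+\deg u_m\le m\le k-1$ unless a rotation closes a spanning cycle), which this scheme recovers as the $r=2$ special case.

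\emph{Main obstacle.} The entire difficulty is the rotation step. For ordinary paths Pósa-type rotations reroute an endpoint essentially for free; for tight paths, rerouting forces \emph{every} shifted $r$-window along the new path to remain an edge, an extremely rigid requirement, and this rigidity is precisely why the conjecture is open for $r\ge 3$. To carry the program through I would try to (i) establish a usable tight‑rotation/connectivity lemma, perhaps only after passing to a cleaned, tightly connected subhypergraph, or by rotating tight \emph{walks} rather than paths and then extracting a path from them; and (ii) feed in the convex geometric machinery of the present paper, which controls the many ``crossing shapes'' an ordered tight path can assume and so removes the need to reroute within a fixed vertex order. Even granting (i)--(ii), reaching the exact constant $\frac{k-1}{r}$ — rather than $c\cdot k$ for an absolute $c$ — would additionally require the convex geometric Turán numbers to be pinned down \emph{exactly} rather than only asymptotically as here, together with an averaging over vertex orderings that loses nothing; closing those last two gaps is, in my view, where the real content of Kalai's conjecture lies.
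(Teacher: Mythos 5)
The statement you are asked to prove is Kalai's conjecture, which is \emph{open} for $r\ge 3$; the paper states it only as a conjecture and proves the much weaker bound of Theorem~\ref{tight-path} (roughly a factor $r/2$ larger for even $r$). So there is no proof in the paper to compare against, and your proposal, as you yourself acknowledge, is not a proof either. Your outer reductions are all correct and standard: the base case $n\le k+r-2$ via $\binom nr=\frac{n-r+1}{r}\binom{n}{r-1}$, the component decomposition via superadditivity of $\binom{\cdot}{r-1}$, and the deletion of a vertex of degree at most $\frac{k-1}{r}\binom{n-1}{r-2}$ via Pascal's identity. These legitimately reduce the conjecture to your ``Dirac-type'' statement for connected $r$-graphs of large minimum vertex degree.

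The genuine gap is that this Dirac-type statement is essentially the whole conjecture, and the longest-path argument you sketch does not engage with it. Maximality of $Q$ bounds the \emph{codegree} of the two end $(r-1)$-sets by $k-1$, but your hypothesis is a lower bound on \emph{vertex} degrees, which are sums over $\binom{n-1}{r-2}$ many $(r-1)$-sets; for $r=2$ these notions coincide, which is exactly why Erd\H{o}s--Gallai works there and why nothing transfers for $r\ge 3$. The rotation step that would convert the codegree information into a contradiction does not exist for tight paths: rerouting forces every shifted $r$-window to be an edge, and no tight-rotation lemma of the required strength is known. Importing the convex geometric machinery of this paper cannot close the gap either, since that machinery only yields the constant $\frac{k-1}{2}$ (for even $r$), not $\frac{k-1}{r}$. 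So what you have is a correct reduction plus an honest description of the open problem, not a proof.
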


A construction based on combinatorial designs shows this conjecture if true is tight -- the existence of designs was established by Keevash~\cite{Keevash} and also more recently by Glock, K\"{u}hn, Lo and Osthus~\cite{GKLO}.
It is straightforward to see that any $n$-vertex $r$-graph $H$ that does not contain a tight $k$-path has at most $(k - 1){n \choose r - 1}$ edges. Patk\'{o}s~\cite{Patkos} gave an improvement over this
bound in the case $k < 3r/4$. In the special case $k = 4$ and $r = 3$, it is shown in~\cite{FJKMV} that $\ex(n,P_4^3) = {n \choose 2}$ for all $n \geq 5$. In this paper, we give the first non-trivial upper bound on $\ex(n,P_k^r)$ valid for all $k$ and $r$:

\begin{theorem}\label{tight-path}
	For $n \geq 1$, $r \geq 2$, and $k \geq 1$,
	\[ \ex(n,P_k^r)  \leq \left\{\begin{array}{ll}
	\frac{k-1}{2}{n \choose r - 1} & \mbox{ if }r\mbox{ is even} \\
	\frac{1}{2}(k + \lfloor \frac{k-1}{r}\rfloor){n \choose r - 1} & \mbox{ if }r\mbox{ is odd}
	\end{array}\right.\]
\end{theorem}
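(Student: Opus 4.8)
The plan is to obtain Theorem~\ref{tight-path} as a corollary of a purely convex-geometric extremal statement, which is where all the difficulty lies. Place the vertices on a circle in a fixed cyclic order, so that an $r$-graph on $[n]$ becomes a convex geometric $r$-graph, and single out those tight $k$-paths $v_0v_1\cdots v_{k+r-2}$ whose vertices, read in the order of the path, realize every window $\{v_i,\dots,v_{i+r-1}\}$ as an edge while sitting in convex position in the appropriate sense; call these \emph{convex tight $k$-paths} (for $r=2$ this is just a path, and for larger $r$ it carries a convex-position constraint on how consecutive edges interleave). A convex tight $k$-path is in particular a tight $k$-path, so if $H$ has no tight $k$-path then, under any cyclic order imposed on $V(H)$, the resulting convex geometric $r$-graph has no convex tight $k$-path. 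Hence it suffices to prove: \emph{a convex geometric $r$-graph on $n$ vertices with no convex tight $k$-path has at most $\tfrac{k-1}{2}\binom{n}{r-1}$ edges if $r$ is even, and at most $\tfrac12\bigl(k+\lfloor\tfrac{k-1}{r}\rfloor\bigr)\binom{n}{r-1}$ edges if $r$ is odd}. Since this ordering step does not change the number of edges, the parity dichotomy of the theorem is inherited verbatim; and the classical results quoted in the abstract (Hopf--Pannwitz, Sutherland, Kupitz--Perles, Erd\H{o}s--Gallai) reappear as the $r=2$ and small-parameter instances, a useful consistency check on the constants.

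I would prove the geometric statement by induction on $n$. Write $B(n)$ for the claimed upper bound; by Pascal's identity $B(n)-B(n-1)$ equals $\tfrac{k-1}{2}\binom{n-1}{r-2}$ for even $r$, and the analogous quantity with $\tfrac12(k+\lfloor\tfrac{k-1}{r}\rfloor)$ for odd $r$. If some vertex $v$ on the circle lies in at most $B(n)-B(n-1)$ edges, then deleting $v$ leaves a convex geometric $r$-graph on $n-1$ vertices still free of convex tight $k$-paths, and $B(n-1)$ plus the degree bound telescopes exactly to $B(n)$. The real work is therefore the opposite regime: \textbf{if every vertex lies in more than $B(n)-B(n-1)$ edges, one must exhibit a convex tight $k$-path}. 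For this I would take a longest convex tight path $Q$, suppose it has fewer than $k$ edges, and analyze which vertices of the circle can be attached at its two convex ends; the cyclic position of any candidate vertex, relative to the arcs cut off by the terminal $(r-1)$-set of $Q$, is forced by convex position, so a counting argument against the minimum-degree hypothesis should either extend $Q$ outright or produce a local "rotation" that reassembles a longer convex tight path. This rotation bookkeeping is the convex-geometric analogue of the path--cycle exchanges in the classical Erd\H{o}s--Gallai argument, and it is where the quoted low-dimensional theorems enter as building blocks.

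The main obstacle is the odd-$r$ case, and in particular producing — rather than merely accommodating — the correction term $\lfloor(k-1)/r\rfloor$. When $r$ is even the two directions in which a vertex can be appended to a convex tight path behave symmetrically, so one can afford the clean threshold $\tfrac{k-1}{2}\binom{n-1}{r-2}$; when $r$ is odd this symmetry breaks, and an extremal configuration can repeatedly slip in one extra edge with period about $r$ along the path, which is precisely the arithmetic behind $\lfloor(k-1)/r\rfloor$. The delicate point is to carry this slack through the induction without it compounding from level to level; I expect this forces a weighted count — a potential function on the $(r-1)$-subsets, or on the arcs of the circle, calibrated so that each convex one-step extension raises the potential by a controlled amount — in place of a bare degree count, so that "every vertex has large degree" can be upgraded to "some $(r-1)$-set supports enough consistently oriented extensions to thread a full-length convex tight path." Once this engine is in place, the base cases $k\le r$, the ordering reduction, and the telescoping should all be routine.
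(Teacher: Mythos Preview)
Your reduction step is the problem. Placing $V(H)$ on a circle does nothing by itself: if ``convex tight $k$-path'' means any tight $k$-path, then the convex-geometric statement you want is identical to Theorem~\ref{tight-path} and you have gained no leverage; if it means a specific ordered type such as the zigzag $\mathsf P_k^r$, then the extremal bound is \emph{not} $\tfrac{k-1}{2}\binom{n}{r-1}$ but rather $\tfrac{(k-1)(r-1)}{r}\binom{n}{r-1}$ (this is Theorem~\ref{mainzigzag}, and Section~\ref{lb} shows it is asymptotically tight). For $r\ge 4$ even the latter is strictly larger, so the geometric theorem you propose to prove is false as stated. The induction-on-$n$/rotation scheme you sketch is the Erd\H{o}s--Gallai mechanism; no analogue of it is known for tight paths in $r$-graphs with $r\ge 3$, and your outline does not supply one.

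The paper's argument is structurally different. For even $r$ it does not put all of $V(H)$ on one circle; instead it takes a \emph{random} equipartition $V(H)=B_0\cup\cdots\cup B_{s-1}$ with $s=r/2$, keeps only the edges meeting each $B_i$ in exactly two vertices, and puts a separate cyclic order on each $B_i$. Within this $s$-partite subgraph one defines ``good'' tight paths whose trace on each $B_i$ is a graph zigzag, and the extension/injection machinery of Section~\ref{extend} applies verbatim in each part to give $|S_k(G)|\ge 2^s|G|-2^{s-1}\sum_i|\partial_{h(i)}G|$. Taking expectations over the random partition converts this into $|H|\le\tfrac{k-1}{2}|\partial H|$; the factor $\tfrac12$ (rather than $\tfrac{r-1}{r}$) comes precisely from the $2$-vertices-per-block structure. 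For odd $r$ there is no potential function or asymmetry analysis: one forms the $(r+1)$-graph $H^+=\{e\cup\{x\}:e\in H,\,x\in X\}$ for a large auxiliary set $X$, checks that a tight $\ell$-path in $H^+$ yields a tight $(\ell+1-\lceil(\ell+r)/(r+1)\rceil)$-path in $H$, applies the even-$r$ bound to $H^+$, and lets $|X|\to\infty$. The $\lfloor(k-1)/r\rfloor$ term drops out of this arithmetic.
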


The case $r = 2$ of this result is the well-known Erd\H{o}s-Gallai Theorem~\cite{Erdos-Gallai} on paths in graphs. We prove Theorem \ref{tight-path}
by introducing a novel method for extremal problems for paths in convex geometric hypergraphs.

\medskip

{\bf Convex geometric hypergraphs.}  A {\em convex geometric hypergraph} (or cgh for short) is an $r$-graph whose vertex set is a set $\vb{\Omega}_n$ of $n$ vertices in strictly convex position in the plane, and whose edges are viewed as convex $r$-gons with vertices from $\vb{\Omega}_n$. Given an $r$-uniform cgh $F$, let $\ex_{\circlearrowright}(n,F)$ denote the maximum number of edges in an $n$-vertex $r$-uniform cgh that does not contain $F$.
Extremal problems for convex geometric graphs (or cggs for short) have been studied extensively, going back to theorems in the 1930's on disjoint line segments in the plane.
We refer the reader to the papers of Bra\ss, K\'{a}rolyi and Valtr~\cite{Brass-Karolyi-Valtr}, Capoyleas and Pach~\cite{Capoyleas-Pach} and the references therein for many related extremal problems on convex geometric graphs and to Aronov, Dujmovi\v{c}, Morin, Ooms and da Silveira~\cite{Aronov}, Bra\ss~\cite{Brass}, Brass, Rote and Swanepoel~\cite{Brass-Rote-Swanepoel}, and Pach and Pinchasi~\cite{Pach-Pinchasi} for problems in convex geometric hypergraphs, and their connections to important problems in
discrete geometry, as well as the triangle-removal problem (see Aronov, Dujmovi\v{c}, Morin, Ooms and da Silveira~\cite{Aronov} and Gowers and Long~\cite{Gowers-Long}).

\medskip

Concerning results on convex geometric graphs, let $\mathsf M_k$ denote the cgg consisting of $k$ pairwise disjoint line segments. Generalizing results of Hopf and Pannwitz~\cite{Hopf-Pannwitz} and Sutherland~\cite{Sutherland}, Kupitz~\cite{Kupitz} and Kupitz and Perles~\cite{Kupitz-Perles}
showed that for $n \geq k\ge 2$,
\[ \ex_{\circlearrowright}(n,\mathsf M_k) \leq (k - 1)n.\]
Perles proved the following even stronger theorem. Define a {\em $k$-zigzag} $\mathsf P_k$ to be a $k$-path $v_0 v_1 \dots v_k$ with vertices in $\vb{\Omega}_n$
such that in a fixed cyclic ordering of $\vb{\Omega}_n$, the vertices appear in the order $v_0, v_2, v_4, \dots, v_5, v_3, v_1$, $v_0$ (see the left picture in Figure 1).

\begin{theorem}[Perles] \label{perlesthm}  For $n, k \ge 1$,  $\ex_{\circlearrowright}(n,\mathsf P_k) \leq (k - 1)n/2$.
\end{theorem}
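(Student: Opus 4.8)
The plan is to transcribe the classical proof of the Erd\H{o}s--Gallai theorem into the convex setting, the one genuinely new ingredient being that the edges of a zigzag sit in the plane in the prescribed nested cyclic pattern. Since $\ex_{\circlearrowright}(n,\mathsf P_k)\le (k-1)n/2$ is equivalent to the statement that every $\mathsf P_k$-free cgg $G$ on $\vb{\Omega}_n$ has average degree at most $k-1$, I would argue by induction on $n$. The base cases $n\le k$ need nothing: a copy of $\mathsf P_k$ has $k+1$ vertices, so the complete cgg $K_n$ is already $\mathsf P_k$-free, and $\binom n2\le (k-1)n/2$ precisely when $n\le k$ (the case $k=1$ being vacuous).

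For the inductive step I would first dispose of a disconnected-type situation. Suppose the cyclic order of $\vb{\Omega}_n$ can be cut into two nonempty arcs $A$ and $B$ with no edge of $G$ joining $A$ to $B$. Then every edge lies inside $A$ or inside $B$, each of $G[A],G[B]$ is a $\mathsf P_k$-free cgg on fewer vertices, and adding the two inductive bounds gives $e(G)\le (k-1)(|A|+|B|)/2=(k-1)n/2$. So we may assume $G$ admits no such arc-cut. In this ``arc-connected'' case I would pass to a \emph{longest} zigzag $Z=v_0v_1\cdots v_\ell$ in $G$; since $G$ is $\mathsf P_k$-free, $\ell\le k-1$. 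Because the vertices of $Z$ occur in the cyclic order $v_0,v_2,v_4,\dots,v_5,v_3,v_1$, a short case analysis identifies exactly which chords incident to an endpoint would lengthen $Z$: for instance, $v_0$ can have no neighbour in the open arc between $v_0$ and $v_1$ that contains no other vertex of $Z$, since such a neighbour $w$ would give the longer zigzag $wv_0v_1\cdots v_\ell$. More importantly, an edge from an endpoint of $Z$ to a suitably placed interior vertex $v_i$ should let one \emph{rotate} $Z$ into another longest zigzag with a new endpoint, and iterating this produces a family of longest zigzags whose endpoints lie in a short arc and whose neighbourhoods are therefore confined. Combined with arc-connectivity, this should pin down enough of the extremal structure to delete a bounded end-segment of $Z$ -- a few vertices spanning a short arc, each contributing at most $(k-1)/2$ to the edge count -- and so close the induction.

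I expect the work to concentrate in two places. First, one must check that every rotation stays inside the class of zigzags: a rotation permutes the path vertices, and one has to verify that the resulting order is still the alternating order forced by the fixed convex position of $\vb{\Omega}_n$; this is the delicate bookkeeping. Second, and more fundamentally, a single long zigzag does not on its own bound $e(G)$ -- unlike the minimum-degree step in the Erd\H{o}s--Gallai proof -- so the induction must remove a carefully chosen arc, or else invoke a sharper count producing a vertex whose degree is at most $k-1$ and whose deletion the arc-connectivity renders affordable; making this globalization precise is, I think, the main obstacle. As a consistency check, for $k=2$ the whole scheme collapses to the remark that a $\mathsf P_2$-free cgg has no two incident chords, hence is a matching with at most $n/2$ edges, matching the claimed bound.
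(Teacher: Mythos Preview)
Your proposal is a plan rather than a proof, and the obstacle you yourself flag is real and, as stated, fatal. Rotations of zigzags are \emph{not} zigzags in general. Take $\ell=3$ with cyclic order $v_0,v_2,v_3,v_1$ and suppose $v_0v_2\in G$. The rotated path is $v_1v_0v_2v_3$; for this to be a zigzag with $w_i$ its $i$th vertex one needs the cyclic order $w_0,w_2,w_3,w_1=v_1,v_2,v_3,v_0$, but the actual cyclic order is $v_1,v_0,v_2,v_3$. So the basic Erd\H{o}s--Gallai rotation already fails at length three, and there is no evident repair: the vertices of $\vb{\Omega}_n$ are nailed down, so you cannot re-seat them to restore the alternating pattern. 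Without rotations you have no mechanism to propagate the maximality of $Z$ into a degree bound, and the ``arc-connected'' branch of your induction does not close. Your sketch also never uses arc-connectivity in any concrete way; it is invoked only as a hope.

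The paper's argument avoids longest paths and rotations entirely and is much shorter. It runs by induction on $k$ rather than on $n$. For each vertex $v$ with $N(v)\neq\emptyset$, let $f(v)$ be the \emph{first} neighbour of $v$ after $v$ in the cyclic order $\prec$, and set $E=\{vf(v):v\in V(G)\}$, so $|E|\le n$. The key one-line observation is that if $v_0v_1\cdots v_{k-2}$ is a $(k-2)$-zigzag in $G\setminus E$, then $f(v_0)\,v_0v_1\cdots v_{k-2}\,f(v_{k-2})$ is a $k$-zigzag in $G$: by the definition of ``first neighbour'', $f(v_0)$ lands in the correct arc to prepend, and $f(v_{k-2})$ in the correct arc to append, preserving the alternating cyclic pattern. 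Hence $G\setminus E$ is $\mathsf P_{k-2}$-free, and induction gives $|G|\le |E|+(k-3)n/2\le (k-1)n/2$. (Equivalently, one counts ordered ends of zigzags and shows $|S_k(G)|\ge 2|G|-(k-1)n$ via two injections, which is the form that generalises to $r$-uniform cghs.) The moral is that in the convex setting the right substitute for ``take a longest path'' is ``peel off the extremal-neighbour edges'', which interacts cleanly with the zigzag order; your rotation scheme does not.
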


The bound in Theorem~\ref{perlesthm}  is tight when $k$ divides $n$ since any disjoint union of cliques of order $k$ does not contain any path with $k$ edges.
In particular, since $\mathsf P_{2k-1}$ contains $\mathsf M_k$, Theorem~\ref{perlesthm} implies $\ex_{\circlearrowright}(n,\mathsf M_k) \leq \ex_{\circlearrowright}(n,\mathsf P_{2k-1}) \leq (k - 1)n$. It appears to be challenging to determine for all $k$ and $r$ the exact value of the extremal function or the extremal cghs without $k$-zigzag (see Keller and Perles~\cite{Chaya-Perles} for a discussion of extremal constructions in the case $r = 2$).

\medskip

In this paper, we generalize Theorem \ref{perlesthm}
to convex geometric hypergraphs, and use its proof technique to prove Theorem \ref{tight-path}. We let $\prec$ denote a fixed cyclic ordering of the vertices of $\vb{\Omega}_n$, and let $[u,v] = \{w \in \vb{\Omega}_n : u \prec w \prec v\}$ denote a {\em segment} of $\vb{\Omega}_n$. If $I_1, I_2, \ldots \subset \vb{\Omega}_n$, then we write $I_1 \prec I_2 \prec \cdots$ if all vertices of $I_j$ are followed in the  ordering $\prec$ by all vertices of $I_{j+1}$ for $j \geq 1$.  We use the following definition of a path in a convex geometric hypergraph:

\begin{definition} [Zigzag paths] \label{defz}
	For $k \geq 1$ and even $r \geq 2$, a tight $k$-path $v_0 v_1 \dots v_{k + r - 2}$ with vertices in $\vb{\Omega}_n$ is a {\em $k$-zigzag}, denoted $\mathsf P_k^r$, if
	there exist disjoint segments $I_0 \prec I_1 \prec \dots \prec I_{r-1}$ of $\vb{\Omega}_n$ such that $\{v_i : i \equiv j \Mod{r}\} \subseteq I_j$ for $0 \leq j < r$ and
	\vspace{-0.1in}
	\begin{center}
		\begin{tabular}{lp{5.8in}}
			{\rm (i)}  & if $j$ is even, then $v_j  \prec  v_{j+r}  \prec  v_{j + 2r} \prec  \cdots$. \\
			{\rm (ii)} & if $j$ is odd, then $v_j \succ v_{j+r} \succ  v_{j+2r} \succ \cdots$.
		\end{tabular}
	\end{center}
\end{definition}
\vspace{-0.1in}

In words, the vertices of the zigzag with subscripts congruent to $j \Mod{r}$ appear in increasing order of subscripts if $j$ is even, followed by the vertices with subscripts congruent to $j + 1 \Mod{r}$ in decreasing order of subscripts with respect to the cyclic ordering $\prec$.
In the case of graphs, a $k$-zigzag is simply $\mathsf P_k^2 = \mathsf P_k$ from Theorem \ref{perlesthm}. We give examples of zigzag paths $\mathsf P_6^2$ and $\mathsf P_5^4$ in Figure \ref{fig:zigzags} below (the last edge of each path is indicated in bold).

\begin{figure}[!ht]
	\begin{center}
		\includegraphics[width=4in]{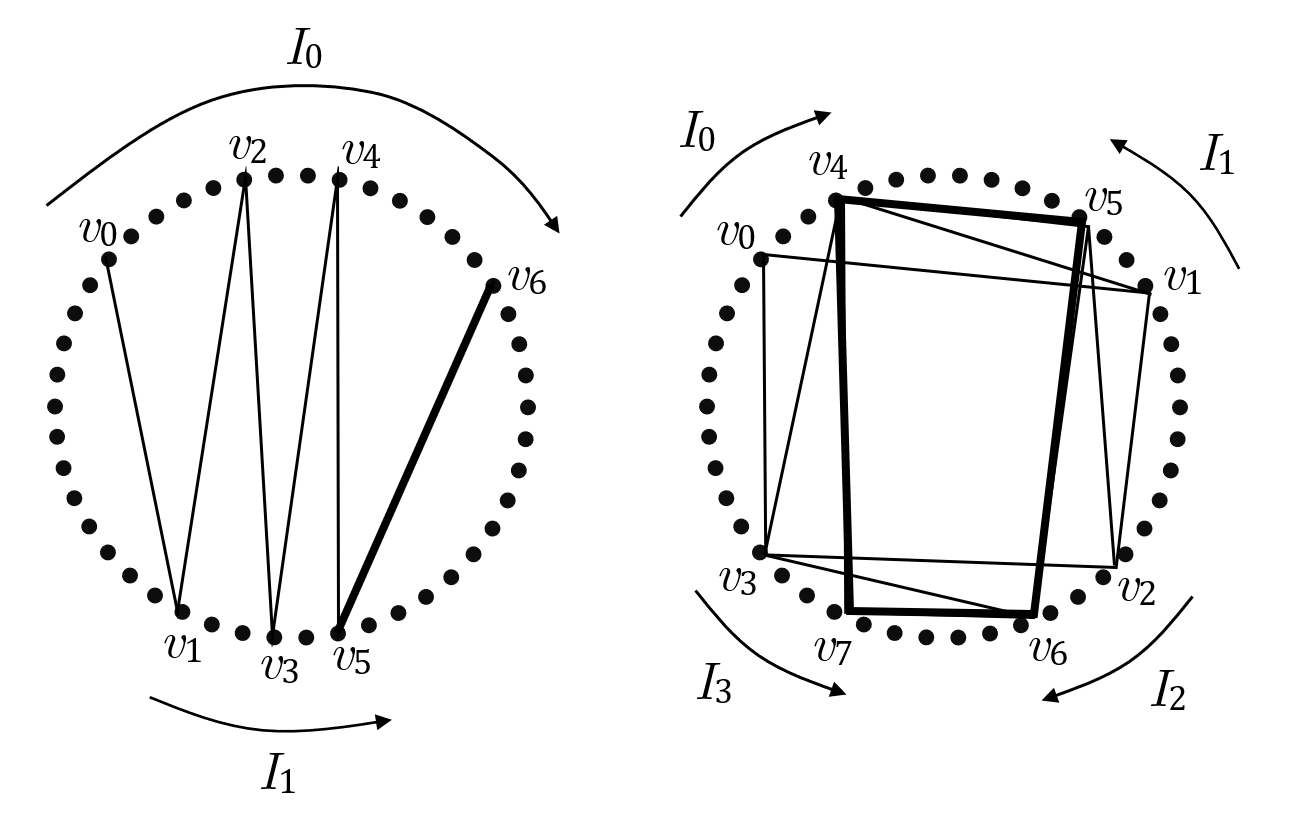}
		\caption{Zigzag paths}
		\label{fig:zigzags}
		
	\end{center}
	
\end{figure}

\vspace{-0.2in}

The following result generalizes Theorem~\ref{perlesthm} to $r$-uniform cghs when $r$ is even:

\begin{theorem}  \label{mainzigzag}
	Let $n, k \ge 1$, and let $r \geq 2$ be even. Then
	\[ \ex_{\circlearrowright}(n,\mathsf P_k^r) \leq \frac{(k-1)(r - 1)}{r}{n \choose r-1}.\]
\end{theorem}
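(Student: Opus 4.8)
To prove Theorem~\ref{mainzigzag}, the plan is to establish the equivalent inequality
\[ r\,|E(H)| \;\le\; (k-1)(r-1)\binom{n}{r-1} \]
for every $n$-vertex $r$-uniform cgh $H$ on a convex point set $\vb{\Omega}_n$ with cyclic order $\prec$ that contains no $\mathsf P_k^r$; here the left-hand side counts \emph{incidences}, i.e.\ pairs $(e,w)$ with $e\in E(H)$ and $w\in e$. I would distribute these $r\,|E(H)|$ incidences over the $\binom{n}{r-1}$ sets $S\in\binom{\vb{\Omega}_n}{r-1}$ by sending $(e,w)$ to $S:=e\setminus\{w\}$, and try to show that each $S$ receives at most $(k-1)(r-1)$ of them. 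This is \emph{not} the (false) claim that every $(r-1)$-set lies in few edges: a sunflower of edges through a fixed $(r-1)$-set can be arbitrarily large and still contain no copy of $\mathsf P_3^r$, so for $k\ge 3$ it is $\mathsf P_k^r$-free. Rather, the incidences at $S$ have to be controlled through a length label.

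First I would record the combinatorial skeleton of a zigzag. Unwinding Definition~\ref{defz}, a $\mathsf P_\ell^r$ with witnessing segments $I_0\prec\dots\prec I_{r-1}$ is exactly a sequence of edges $e_0,e_1,\dots,e_{\ell-1}$ of $H$, each meeting every segment $I_j$ in a single vertex (its \emph{token} in $I_j$), such that for every $i$ the edge $e_{i+1}$ arises from $e_i$ by moving only the token in segment $I_{i\bmod r}$: to a $\prec$-later vertex of that segment when $i\bmod r$ is even, to a $\prec$-earlier one when $i\bmod r$ is odd. Conversely, any such alternating ``transversal walk'' through a system of $r$ disjoint cyclic segments is a $\mathsf P_\ell^r$. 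This is where the parity of $r$ is used: one full cyclic pass through the $r$ segments performs $r/2$ forward and $r/2$ backward moves, so the alternating rule closes up consistently around the cycle; for odd $r$ it does not, which is why the theorem concerns even $r$ only.

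Next I would label incidences. For an incidence $(e,w)$ let $\phi(e,w)$ be the largest $\ell$ for which some $\mathsf P_\ell^r$ in $H$ has $e$ as its last edge with a segment system in which $w$ is the token of $e$ next to be moved (equivalently: in a one-step extension of this zigzag, $w$ is the vertex of $e$ that is dropped, and $S=e\setminus\{w\}$ records the other $r-1$ tokens). Since $H$ has no $\mathsf P_k^r$, $\phi(e,w)\le k-1$ always. The heart of the argument is the local statement: \emph{for every $S\in\binom{\vb{\Omega}_n}{r-1}$ and every $\ell$, at most $r-1$ vertices $w$ satisfy $S\cup\{w\}\in E(H)$ and $\phi(S\cup\{w\},w)=\ell$.} Granting this, summing over all $S$ and all $\ell\in\{1,\dots,k-1\}$ gives exactly $r\,|E(H)|\le (k-1)(r-1)\binom{n}{r-1}$. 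To prove the local statement one argues by contradiction: if $w_1,\dots,w_r$ are distinct with $S\cup\{w_t\}\in E(H)$ and $\phi(S\cup\{w_t\},w_t)=\ell$ for all $t$, witnessed by zigzags $P_1,\dots,P_r$, then the $w_t$ occupy the $r-1$ open arcs into which $S$ splits the circle, so two of them, say $w_s,w_t$, lie in one arc $\alpha$. As $\alpha$ contains no vertex of $S$, the segment of the relevant $P_t$ that carries $w_t$ can be enlarged inside $\alpha$ to absorb $w_s$, and since $w_s,w_t$ are $\prec$-comparable in $\alpha$ one may select which of $P_s,P_t$ to use so that moving the active token from the $\prec$-smaller to the $\prec$-larger of $w_s,w_t$ obeys the alternating rule. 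This produces a strictly longer zigzag, and tracking its last edge and active token should certify $\phi\ge\ell+1$ at the incidence we started from, a contradiction (and when $\ell=k-1$ it yields a $\mathsf P_k^r$).

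The main obstacle is making this last step precise. Because in a tight path with consecutive edges $e_0,e_1,e_2$ one has $e_1\cap e_2\neq e_0\cap e_1$, one cannot have three consecutive edges of a tight path all containing $S$; hence the exchange above must splice $S\cup\{w_s\}$ in as a \emph{new terminal} edge of the zigzag rather than append it outright, after which the cyclic segment system has to be re-cut. The crux is to choose the exact token-role that the distinguished vertex $w$ plays in the definition of $\phi$ so that, after this splice and re-cut, the lengthened zigzag is witnessed by an incidence with the \emph{same} defining data, and hence a $\phi$-value at least $\ell+1$, genuinely contradicting $\phi=\ell$ and sidestepping the sunflower obstruction. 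Verifying that such a choice exists and that the alternating-monotonicity bookkeeping survives the operation is where the real work lies; the reduction to the local statement, and the parity input, are routine by comparison.
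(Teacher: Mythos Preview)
Your overall strategy is the paper's: count the $r|H|$ ``oriented edges'' and show that at most $(r-1)|\partial H|$ of them are lost at each of the $k-1$ length increments. But the packaging you chose---the label $\phi(e,w)$ and the local claim that at most $r-1$ incidences through a given $S$ carry a given $\phi$-value---is exactly what creates the obstacle you flag at the end. You are comparing two maximal zigzags $P_s,P_t$ whose segment systems need not agree, and then trying to splice one into the other and re-cut segments; as you say, you have not verified that this works, and your proposal is therefore incomplete.

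The paper sidesteps this entirely by tracking a slightly richer object. Instead of the incidence $(e,w)$, record the full ordered end $\vb{v}_k=(v_{k-1},v_k,\dots,v_{k+r-2})$ of a $k$-zigzag, and let $S_k(H)\subseteq V(H)^r$ be the set of such ends. Then $|S_1(H)|=r|H|$ (one ordering per choice of first vertex), and the whole argument reduces to two three-line injections. First, for an end $\vb{v}_k$ with $X(\vb{v}_k)\ne\emptyset$, extend by the vertex of $X(\vb{v}_k)$ closest to $v_{k-1}$; the ``closest'' rule makes this an injection $S_k\setminus T_k\hookrightarrow S_{k+1}$. Second, for a non-extendable end $\vb{v}_k\in T_k$, forget $v_{k-1}$ and keep the cyclically ordered tail $(v_k,\dots,v_{k+r-2})$; if two non-extendable ends shared this tail, their two $v_{k-1}$'s would lie in the \emph{same} interval $I(\vb{v}_k)$ (because the tail, being ordered, pins down that interval), and one would witness that the other is extendable. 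Hence $|T_k(H)|\le (r-1)|\partial H|$, and induction gives $|S_k(H)|\ge r|H|-(k-1)(r-1)|\partial H|$.

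The point is that by keeping the cyclic ordering of $S=e\setminus\{w\}$ as part of the data, the two competing ends in the key comparison automatically share the \emph{same} interval for the active token---this is precisely your ``two $w$'s in one arc'' observation, but now no merging of segment systems is needed. The factor $r-1$ then appears not via pigeonhole over arcs but simply as the number of cyclic orderings of an $(r-1)$-set. With this reformulation, the ``real work'' you anticipated disappears.
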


This theorem is asymptotically sharp in infinitely many cases, and is a common generalization Theorem \ref{perlesthm} and the Erd\H{o}s-Gallai Theorem~\cite{Erdos-Gallai}. The proof of  Theorem \ref{mainzigzag} is also the basis for our proof of Theorem \ref{tight-path}.

\medskip

{\bf Organization.} This paper is organized as follows. In Section \ref{extend}, we give a method for extending a $k$-zigzag in an $r$-uniform cgh to a $(k + 1)$-zigzag. This is used in the short proof of Theorem \ref{mainzigzag} in Section \ref{even}. In Section \ref{lb}, we give constructions of dense
cghs without $k$-zigzags, and in Section~\ref{proof-tightpath}, we prove Theorem \ref{tight-path} using the proof technique of Theorem \ref{mainzigzag}.

\medskip

{\bf Notation.} We let $\vb{\Omega}_n$ denote a generic set of $n$ points in strictly convex position in the plane, and let $\prec$ denote a cyclic ordering of $\vb{\Omega}_n$. For $u,v \in \vb{\Omega}_n$, we write $[u,v] = \{w : u \prec w \prec v\}$; this is the set of vertices in the segment of $\vb{\Omega}_n$ from $u$ to $v$ (including $u$ and $v$) in the ordering $\prec$. For $u,v \in \vb{\Omega}_n$, let
$\ell(u,v) = \min\{|[u,v]| - 1,|[v,u]| - 1\}$. In other words,  $\ell(u,v)$ is the number of sides in a shortest segment of $\vb{\Omega}_n$ between $u$ and $v$.
Throughout this paper, cghs have vertex set in $\vb{\Omega}_n$ with cyclic ordering $\prec$.  For an $r$-uniform cgh $F$, let $\ex_{\circlearrowright}(n,F)$ denote the maximum number of edges in an $r$-uniform cgh on $\vb{\Omega}_n$ that does not contain an ordered substructure isomorphic to $F$. We write $V(H)$ for the vertex set of a hypergraph $H$,
and represent the edges as unordered lists of vertices. We identify a hypergraph $H$ with its edge-set, denoting by $|H|$ the number of edges in $H$.
For $v \in V(H)$, the {\em neighborhood of $v$} is $N(v) = \bigcup_{v \in e \in H} (e \backslash \{v\})$.
Let $\partial H$ denote the {\em shadow} of an $r$-graph $H$, namely $\{e \backslash \{x\} : x \in e \in H\}$.

\section{Extending zigzags}\label{extend}

\subsection{Extending zigzags in graphs}

We start with a short proof of Theorem \ref{perlesthm} for zigzags of odd length, along the lines of Perles' proof, which gives an idea of the proof of Theorem \ref{mainzigzag}.

\begin{proposition}
	Let $k \geq 0$. If $G$ is an $n$-vertex cgg with no $(2k+1)$-zigzag, then $|G|  \leq kn$.
\end{proposition}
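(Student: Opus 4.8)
The plan is to prove the contrapositive: if $G$ is an $n$-vertex cgg with $|G| > kn$, then $G$ contains a $(2k+1)$-zigzag. The basic mechanism is a greedy/extremal argument that takes a longest zigzag and shows it must have at least $2k+1$ edges. Concretely, among all zigzag paths in $G$, choose one, say $P = v_0v_1\cdots v_m$, that is longest, and within that, choose it to be ``extremal'' in some appropriate sense at its endpoints (for instance, so that the last edge $v_{m-1}v_m$ has $v_m$ as close as possible to $v_{m-1}$ along the arc on the side dictated by the zigzag condition, or some analogous minimality that pins down where new edges at $v_m$ can go). The key structural claim will be: every neighbor $w$ of the endpoint $v_m$ either already lies on $P$, or adding the edge $v_mw$ produces a longer zigzag --- contradicting maximality. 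This forces $N(v_m) \subseteq V(P)$, so $\deg(v_m) \le m$, i.e. $\deg(v_m) \le m$. Then one deletes $v_m$ and recurses on $G - v_m$, which has $n-1$ vertices and more than $k(n-1)$ edges only if... --- wait, more carefully: if $\deg(v_m) \le m$ and $m \le 2k$, then $G - v_m$ still has more than $kn - m \ge kn - 2k = k(n-2)$ edges on $n-1$ vertices, which is not immediately a contradiction, so instead the cleaner route is a direct counting argument.

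Let me restructure. The plan is: suppose $|G| > kn$. I would pass to a subgraph of minimum degree $> k$ by repeatedly deleting vertices of degree $\le k$; this loses at most $kn$ edges total, so such a nonempty subgraph $G'$ exists. Now I take a longest zigzag $P = v_0v_1\cdots v_m$ in $G'$ and aim to show $m \ge 2k+1$. Here is where Section~\ref{extend}'s extension machinery enters: I want a lemma saying that if $v_m$ has a neighbor $w \notin V(P)$ placed in the correct arc (the arc where the zigzag ``wants'' its next vertex), then $Pw$ is a longer zigzag. So if $P$ is longest, every neighbor of $v_m$ in that arc already lies on $P$. Combined with the analogous statement at $v_0$, and using that the zigzag structure partitions $V(P)$ so that ``the correct arc'' for $v_m$ contains roughly half of $V(P)$, one gets $\deg_{G'}(v_m) \le$ (number of path vertices in that arc) $\le \lceil (m+1)/2 \rceil$. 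Since $\deg_{G'}(v_m) > k$, this yields $\lceil (m+1)/2 \rceil > k$, hence $m \ge 2k$. To get the extra edge and reach $m \ge 2k+1$, I'd refine the count or use that BOTH endpoints have this property together with the parity/placement of the two arcs.

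**The main obstacle** will be pinning down exactly which neighbors of the endpoint can be used to extend the zigzag, and proving that those that cannot be used are confined to a small, explicitly countable subset of $V(P)$. In the graph case a zigzag $v_0v_2v_4\cdots v_5v_3v_1$ alternates sides of the cyclic order, so $v_m$ sits at one ``end'' of an arc and the vertices it could extend to lie on one specific side; the delicate point is handling neighbors $w$ that lie on $P$ but in ``the wrong place'' --- one must argue either that such $w$ still allow a rerouting into a longer or equally long but differently-anchored zigzag, or that they are few. I expect the proof to choose $P$ not merely longest but lexicographically extremal (longest, then with $v_m$ extremal, then $v_0$ extremal), so that a neighbor $w$ of $v_m$ lying on $P$ ``before'' the extremal position can be swapped in to contradict extremality. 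Getting this swapping argument airtight --- verifying that the swapped path is genuinely a zigzag, with all the segment inclusions $I_0 \prec \cdots$ and the monotonicity conditions~(i)--(ii) preserved --- is the technical heart. The counting at the end is then routine: $\deg(v_m) \le k$ for the endpoint of a maximal zigzag in a graph with more than $kn$ edges contradicts the minimum-degree reduction, and tracking the zigzag length through the argument gives the bound $2k+1$.
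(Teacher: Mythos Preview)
Your approach has a genuine gap at the core degree bound. You want to conclude $\deg_{G'}(v_m) \le \lceil (m+1)/2\rceil$ from maximality of the zigzag $P$, but the maximality argument only controls neighbors of $v_m$ lying in the \emph{extension arc} $I(\vb{v}_m)$ (the segment between $v_m$ and $v_{m-1}$ on the side where a new zigzag vertex must go). A neighbor $w$ of $v_m$ sitting in the complementary arc --- say between $v_0$ and $v_2$ in the cyclic order --- need not lie on $P$, and adjoining $v_m w$ does \emph{not} produce a zigzag, so maximality says nothing about it. Hence you have no bound on the total degree of $v_m$. Using both endpoints does not rescue this: the extension arc for $v_0$ is $[v_1,v_0]$ (after reversing), and the two extension arcs together still miss large portions of $\vb{\Omega}_n$. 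Your lexicographic refinement is aimed at neighbors on $P$ in the wrong position, not at off-path neighbors in the wrong arc, so it does not address the gap either.

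The paper's proof avoids this problem entirely by an edge-deletion induction rather than a longest-path argument. For each vertex $v$ with $N(v)\neq\emptyset$, let $f(v)$ be the first neighbor of $v$ after $v$ in the cyclic order, and set $E=\{vf(v):v\in V(G)\}$, so $|E|\le n$. The key observation is that for any $(2k-1)$-zigzag $v_0 v_1\cdots v_{2k-1}$ in $G\setminus E$, the edges $v_0 f(v_0)$ and $v_{2k-1} f(v_{2k-1})$ automatically land in the correct extension arcs (precisely because $f(v)$ is the \emph{first} neighbor clockwise), so $f(v_0)\, v_0\cdots v_{2k-1}\, f(v_{2k-1})$ is a $(2k+1)$-zigzag in $G$. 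Thus $G\setminus E$ has no $(2k-1)$-zigzag, and induction gives $|G|\le |E|+(k-1)n\le kn$. The point is that by stripping off the canonically ``closest'' edge at every vertex simultaneously, one guarantees the extension step works at both ends, sidestepping the issue of uncontrolled neighbors.
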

\vspace{-0.15in}
\begin{proof} Proceed by induction on $k$; for $k = 0$, the statement is clear. Suppose $k \geq 1$ and $G$ is an $n$-vertex cgg with no $(2k + 1)$-zigzag. For $v \in V(G)$, let
	$f(v)$ be the first vertex of $N(v)$ after $v$ in the ordering $\prec$. Let $E = \{vf(v) : v \in V(G)\}$. If $v_0 v_1 \dots v_{2k-1}$ is a $(2k - 1)$-zigzag in $F = G \backslash E$, then $f(v_0) v_0 \dots v_{2k - 1} f(v_{2k - 1})$ is a $(2k + 1)$-zigzag in $G$. So $F$ has no $(2k - 1)$-zigzag, and $|F| \leq (k - 1)n$ by induction. Since $|E| \leq n$, $|G| = |F| + |E| \leq kn$.
\end{proof}

A key point is that a zigzag $v_0 v_1 \dots v_k$ can be extended to a $(k + 1)$-zigzag $v_0 v_1 \dots v_k v$ if $v$ is adjacent to $v_k$ and $v \in [v_k,v_{k-1}]$ if $k$ is even, whereas $v \in [v_{k-1},v_k]$ if $k$ is odd (the reader may find it helpful to
refer to Figure 1). In the next section, we generalize these ideas to uniform cghs.

\subsection{Extending zigzags in hypergraphs}

Fixing an even $r \geq 2$, we write $\vb{v}_k$ as shorthand for $(v_{k-1},v_k,\dots,v_{k+ r - 2})$. We use this as notation for the ordering of the last edge of
a $k$-zigzag:

\begin{definition}
	The {\em end} of a $k$-zigzag $v_0 v_1 \dots v_{k + r- 2}$ is $\vb{v}_k = (v_{k-1},v_k,\dots,v_{k+r-2})$. Let $I(\vb{v}_k) = [v_{k - 1},v_k]$ if $k$ is odd and $I(\vb{v}_k) = [v_{k + r - 2},v_{k - 1}]$ if $k$ is even, and
	\[X(\vb{v}_k) = \{v \in I(\vb{v}_k) : vv_kv_{k+1}\dots v_{k+r-2} \in H\}\]
\end{definition}
Referring to Figure 1, in the picture on the left $X(\vb{v}_6)$ is the set of $v$ in the segment from $v_6$ to $v_5$ clockwise such that $v_6v$ is an edge. In the picture
on the right, $X(\vb{v}_5)$ is the set of $v$ in the segment from $v_4$ to $v_5$ clockwise such that $v_5 v_6 v_7 v$ is an edge.
In the next proposition, we see that any vertex in $X(\vb{v}_k)$ can be used to ``extend'' a $k$-zigzag ending in $\vb{v}_k$ to a $(k + 1)$-zigzag:

\begin{proposition} \label{obs}
	Let $\vb{v}_k \in V(H)^r$ be the end of a $k$-zigzag $\mathsf P$ in $H$. Then for any $v_{k + r - 1} \in X(\vb{v})$,
	$\mathsf P \cup \{v_kv_{k+1}\dots v_{k+r-1}\}$ is a $(k + 1)$-zigzag ending in $\vb{v}_{k+1}$.
\end{proposition}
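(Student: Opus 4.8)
The plan is to verify Definition~\ref{defz} directly for $\mathsf P' := \mathsf P \cup \{v_kv_{k+1}\cdots v_{k+r-1}\}$ with $k$ replaced by $k+1$. First I would dispose of the cheap part. Since $v_{k+r-1}\in X(\vb{v}_k)$, the set $\{v_k,\dots,v_{k+r-1}\}$ is an edge of $H$; adjoining it to $\mathsf P$ (whose edges are $v_iv_{i+1}\cdots v_{i+r-1}$ for $0\le i<k$) produces an $r$-graph whose edges are exactly $v_iv_{i+1}\cdots v_{i+r-1}$ for $0\le i\le k$. So, as soon as the $k+r$ vertices $v_0,\dots,v_{k+r-1}$ are seen to be distinct, $\mathsf P'$ is a tight $(k+1)$-path in $H$ whose last edge, read in path order, is $\vb{v}_{k+1}=(v_k,\dots,v_{k+r-1})$. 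Everything then reduces to producing segments witnessing that $\mathsf P'$ is a zigzag, and the idea is to build them from the segments of $\mathsf P$ by nudging a single boundary to swallow the new vertex $v_{k+r-1}$.

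Second, I would fix disjoint segments $I_0\prec\cdots\prec I_{r-1}$ witnessing that $\mathsf P$ is a $k$-zigzag and pin down where $v_{k+r-1}$ sits. Its index $k+r-1$ is $\equiv k-1\pmod r$, and because $r$ is \emph{even} the residue $c:=(k-1)\bmod r$ has the same parity as $k-1$ --- this is the one place evenness of $r$ is used. Take $k$ odd, so $c$ is even. Then $I(\vb{v}_k)=[v_{k-1},v_k]$, clause~(i) of Definition~\ref{defz} governs class $c$ and clause~(ii) governs class $c^+:=(c+1)\bmod r=k\bmod r$ (parity of $r$ also forces $c\ne r-1$, so $I_{c^+}$ immediately follows $I_c$ with no segment in between). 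Reading off (i) and (ii): among the vertices of $\mathsf P$, $v_{k-1}$ is the $\prec$-last of class $c$ and $v_k$ is the $\prec$-first of class $c^+$. Since $v_{k+r-1}\in[v_{k-1},v_k]$, $v_{k+r-1}\ne v_k$ (it shares an edge with $v_k$), and $v_{k+r-1}\ne v_{k-1}$ (else $\mathsf P'=\mathsf P$ has only $k$ edges), we get $v_{k-1}\prec v_{k+r-1}\prec v_k$; hence $v_{k+r-1}$ follows every class-$c$ vertex and precedes every class-$c^+$ vertex of $\mathsf P$, and in particular is new.

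Third I would build the new segments. Still with $k$ odd: let $I'_c$ be $I_c$ extended forward up to and including $v_{k+r-1}$, let $I'_{c^+}$ be $I_{c^+}$ with its initial stretch up to $v_{k+r-1}$ deleted, and $I'_j:=I_j$ otherwise. Then $I'_0\prec\cdots\prec I'_{r-1}$ are again disjoint segments, the only vertex of $\mathsf P'$ whose home segment changed is $v_{k+r-1}$ (now in $I'_c$), every residue class of $\mathsf P'$ lies in its segment, and clauses~(i)--(ii) survive --- the only affected class is $c$, where $v_{k+r-1}$ sits at the $\prec$-end of $I'_c$ in accordance with~(i). When $k$ is even the situation is the mirror image: $c$ is odd, $I(\vb{v}_k)=[v_{k+r-2},v_{k-1}]$, clause~(ii) governs class $c$ (so $v_{k-1}$ is the $\prec$-first class-$c$ vertex of $\mathsf P$) and clause~(i) governs class $c^-:=(k-2)\bmod r$ (so $v_{k+r-2}$ is the $\prec$-last class-$c^-$ vertex of $\mathsf P$); one gets $v_{k+r-2}\prec v_{k+r-1}\prec v_{k-1}$ and instead pushes the boundary between $I_{c^-}$ and $I_c$ backward past $v_{k+r-1}$, trimming $I_{c^-}$ accordingly. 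In either case the resulting family witnesses that $\mathsf P'$ is a $(k+1)$-zigzag ending in $\vb{v}_{k+1}$.

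The step I expect to be the real (if modest) obstacle is that last verification: that moving one segment boundary yields an \emph{admissible} family $(I'_j)$ --- equivalently, that no vertex of $\mathsf P$ other than $v_{k+r-1}$ changes home segment and that the monotonicity clauses persist. This rests entirely on the two ``extremality'' facts that $v_{k-1}$ is the $\prec$-extreme vertex of its class in $\mathsf P$ and that $v_k$ (resp.\ $v_{k+r-2}$) is the $\prec$-extreme vertex of the neighbouring class, both immediate from clauses~(i)--(ii) applied to $\mathsf P$ together with the parity bookkeeping available because $r$ is even; the rest is routine. One point worth flagging: the argument needs $v_{k+r-1}\ne v_{k-1}$, so one should read $I(\vb{v}_k)$ as excluding $v_{k-1}$, or simply discard that choice, which in any case does not enlarge $\mathsf P$.
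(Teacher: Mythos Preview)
Your argument is correct and follows essentially the same route as the paper's proof: both take the witnessing segments $I_0\prec\cdots\prec I_{r-1}$ for $\mathsf P$, use the evenness of $r$ to match the parity of the residue class $j\equiv k-1\pmod r$ to that of $k-1$, case split on the parity of $k$, locate $v_{k+r-1}$ strictly between $v_{k-1}$ and the extreme vertex of the neighbouring class, and enlarge $I_j$ to absorb $v_{k+r-1}$. Your write-up is more explicit about trimming the adjacent segment and about distinctness of vertices; your flag that the choice $v_{k+r-1}=v_{k-1}$ must be excluded is apt and is a point the paper leaves implicit.
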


\begin{proof} Let $\mathsf P = v_0 v_1 \dots v_{k + r - 2}$ and let $I_0 \prec I_1 \prec \dots \prec I_{r - 1}$ be the segments in Definition \ref{defz}.
	Let $v_{k - 1} \in I_j$, so $j \equiv k - 1 \Mod{r}$. If $k$ is odd, then $j$ is even, and the vertices of $I_j \cup I_{j+1}$ appear in the order $v_j \prec \cdots  \prec v_{k-1} \prec  v_k \prec \cdots \prec v_{j+1}$ by Definition \ref{defz}(i). Then for any $v_{k + r - 1} \in X(\vb{v}_k)$, $e = v_kv_{k + 1}\dots  v_{k + r - 2}v_{k+r-1} \in H$ and adding $e$ to $\mathsf P$ and $v_{k + r - 1}$ to $I_j$ before $v_{k - 1}$ in the clockwise orientation, we obtain a $(k + 1)$-zigzag. Similarly, if $k$ is even, then $j$ is odd so the vertices of $I_{j-1} \cup I_{j}$ appear in the order $v_{j-1} \prec \cdots \prec v_{k+r-2} \prec v_{k-1} \prec \cdots \prec v_{j}$ by Definition \ref{defz}(ii), and we add $e$ to $\mathsf P$ and $v_{k + r - 1}$ after $v_{k - 1}$ in $I_j$ in the clockwise orientation.
\end{proof}

\medskip

\begin{definition} Let $S_k(H)$ be the set of ends $\vb{v}_k \in V(H)^r$ of $k$-zigzags in $H$, and
	\[
	T_k(H) = \{\vb{v}_k \in S_k(H) : X(\vb{v}_k) = \emptyset\}.
	\]
\end{definition}

Informally, $T_k(H)$ is the set of ends of $k$-zigzags which cannot be ``extended'' to $(k + 1)$-zigzags. The two key propositions for the proof of Theorem \ref{mainzigzag}
are as follows.

\medskip

\begin{proposition}\label{injection}
	For $\vb{v}_k \in S_k(H)\setminus T_k(H)$, let $v_{k+r-1} \in X(\vb{v}_k)$ be as close as possible to $v_{k-1}$ in the segment $I(\vb{v}_k)$. Then $f(\vb{v}_k) = \vb{v}_{k+1}$ is an injection from $S_k(H) \backslash T_k(H)$ to $S_{k+1}(H)$. In particular,
	\begin{equation}\label{skbound1}
	|S_{k + 1}(H)| \geq |S_k(H) \backslash T_k(H)|.
	\end{equation}
\end{proposition}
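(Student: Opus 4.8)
The plan is to show that the map $f$ is well-defined and injective by exhibiting an explicit left inverse. That $f$ lands in $S_{k+1}(H)$ is immediate from Proposition~\ref{obs}: for $\vb{v}_k \in S_k(H)\setminus T_k(H)$, the set $X(\vb{v}_k)$ is nonempty, so there is a vertex $v_{k+r-1}\in X(\vb{v}_k)$ closest to $v_{k-1}$ in the segment $I(\vb{v}_k)$, and appending the edge $v_k v_{k+1}\dots v_{k+r-1}$ produces a $(k+1)$-zigzag whose end is $\vb{v}_{k+1} = (v_k,v_{k+1},\dots,v_{k+r-1})$. So $f$ is a function $S_k(H)\setminus T_k(H)\to S_{k+1}(H)$, and the inequality~\eqref{skbound1} follows once injectivity is established.

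For injectivity, first I would observe that from $\vb{v}_{k+1} = (v_k,\dots,v_{k+r-1})$ alone one recovers $(v_k,\dots,v_{k+r-2})$, so the only coordinate of $\vb{v}_k$ that needs to be reconstructed is $v_{k-1}$. The key point is that $v_{k-1}$ is determined by $\vb{v}_{k+1}$ together with the hypergraph $H$: given the tuple $(v_k,\dots,v_{k+r-1})$ one can read off the congruence class $j\equiv k-1\pmod r$ (from the parity/position of the coordinates relative to the cyclic order, as in Definition~\ref{defz}), hence the parity of $k$, hence which segment $I(\vb{v}_k)$ is — namely $[v_{k+r-1},\,?]$-type data. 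Concretely, when $k$ is odd the vertices $v_{k-1},v_k,\dots,v_{k+r-1}$ lie in the pattern dictated by Definition~\ref{defz}, with $v_{k-1}$ the vertex of $I_j$ immediately preceding $v_{k+r-1}$ among the "extending" slot; when $k$ is even the analogous statement holds on the other side. So $v_{k-1}$ is the unique vertex $w$ such that $(w,v_k,\dots,v_{k+r-2})$ is the end of a $k$-zigzag and $v_{k+r-1}$ is the element of $X((w,v_k,\dots,v_{k+r-2}))$ closest to $w$ in $I((w,v_k,\dots,v_{k+r-2}))$.

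The cleanest way to package this is: suppose $f(\vb{v}_k) = f(\vb{v}_k') = \vb{v}_{k+1}$ with $\vb{v}_k = (v_{k-1},v_k,\dots,v_{k+r-2})$ and $\vb{v}_k' = (v_{k-1}',v_k,\dots,v_{k+r-2})$ — note the last $r-1$ coordinates already coincide since they equal the first $r-1$ coordinates of the common value $\vb{v}_{k+1}$. Then $v_{k+r-1}$ is simultaneously the closest element of $X(\vb{v}_k)$ to $v_{k-1}$ in $I(\vb{v}_k)$ and the closest element of $X(\vb{v}_k')$ to $v_{k-1}'$ in $I(\vb{v}_k')$. Using the description of $I(\vb v_k)$ and the defining inequalities (i)--(ii) of Definition~\ref{defz}, the vertex $v_{k-1}$ is forced to be the unique vertex that is adjacent (via the edge $v_{k-1}v_kv_{k+1}\cdots v_{k+r-2}$, which lies in the original zigzag) and occupies the correct position in the cyclic order relative to $v_{k+r-1}$ and the other $v_i$; the same forcing applies to $v_{k-1}'$, giving $v_{k-1}=v_{k-1}'$ and hence $\vb{v}_k=\vb{v}_k'$.

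The main obstacle I anticipate is purely bookkeeping: carefully checking, from Definition~\ref{defz}(i)--(ii) and the "closest to $v_{k-1}$" choice, that the position of $v_{k-1}$ in the cyclic order is rigidly pinned down by $\vb{v}_{k+1}$ and $H$ in both parity cases — i.e., that no other candidate $w$ could yield the same extended end with the same minimality property. This is where one must be scrupulous about which segment $I(\cdot)$ is and on which side of $v_{k+r-1}$ the vertex $v_{k-1}$ sits; once the geometry of the two cases ($k$ odd, $k$ even) is laid out explicitly, injectivity drops out. No delicate estimates are involved — the content is entirely in the combinatorial geometry of the cyclic order.
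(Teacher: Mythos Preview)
Your setup matches the paper's: use Proposition~\ref{obs} to see that $f$ lands in $S_{k+1}(H)$, observe that $f(\vb{v}_k)=f(\vb{w}_k)$ forces $v_i=w_i$ for $k\le i\le k+r-1$, and then argue that $v_{k-1}=w_{k-1}$. Where your proposal goes soft is precisely at this last step, which you label ``bookkeeping'' about cyclic positions. It is not bookkeeping; it is the one idea in the proof, and your text does not actually supply it.

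The missing observation is this. The edge $w_{k-1}w_k\cdots w_{k+r-2}=w_{k-1}v_k\cdots v_{k+r-2}$ lies in $H$ (it is the last edge of the $k$-zigzag with end $\vb{w}_k$), so $w_{k-1}$ is a candidate for membership in $X(\vb{v}_k)$ once it lies in the segment $I(\vb{v}_k)$. If $v_{k-1}\ne w_{k-1}$, then since $I(\vb{v}_k)$ and $I(\vb{w}_k)$ share one endpoint ($v_k$ when $k$ is odd, $v_{k+r-2}$ when $k$ is even) and both contain the common point $v_{k+r-1}=w_{k+r-1}$, either $w_{k-1}$ lies strictly between $v_{k-1}$ and $v_{k+r-1}$ in $I(\vb{v}_k)$, or $v_{k-1}$ lies strictly between $w_{k-1}$ and $w_{k+r-1}$ in $I(\vb{w}_k)$. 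In the first case $w_{k-1}\in X(\vb{v}_k)$ and is closer to $v_{k-1}$ than $v_{k+r-1}$ is, contradicting the choice of $v_{k+r-1}$; the second case is symmetric. Hence $v_{k-1}=w_{k-1}$.

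Your language about $v_{k-1}$ being ``forced'' or ``rigidly pinned down by $\vb{v}_{k+1}$ and $H$'' suggests you are hunting for an explicit left inverse, but you never write one down, and the clean route is the contradiction above: the competing preimage \emph{itself} belongs to the extension set and beats the chosen $v_{k+r-1}$, violating the minimality that defines $f$. Once you see that, there is nothing further to check beyond the odd/even symmetry.
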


\begin{proof}
	By Proposition \ref{obs}, $f(\vb{v}_k) \in S_{k + 1}(H)$. Furthermore, $f(\vb{v}_k) = f(\vb{w}_k)$ implies
	$\vb{v}_{k + 1} = \vb{w}_{k + 1}$, which gives $v_i = w_i$ for $k \leq i \leq k + r - 1$. If $v_{k - 1} \neq w_{k - 1}$,
	then either $w_{k - 1}$ is closer to $v_{k - 1}$ than $w_{k + r - 1}$ in $I(\vb{v}_k)$, or $v_{k - 1}$ is closer to $w_{k - 1}$ than $v_{k + r - 1}$
	in $I(\vb{v}_k)$. These contradictions imply $v_{k - 1} = w_{k - 1}$, and so $\vb{v}_k = \vb{w}_k$ and $f$ is an injection.
\end{proof}

\smallskip

\begin{proposition}\label{injection2}
	For $\vb{v}_k \in T_k(H)$, the map $g(\vb{v}_k) = (v_k,v_{k + 1},\dots,v_{k + r - 2})$ is an injection from $T_k(H)$ to cyclically ordered elements of $\partial H$. In particular,
	\begin{equation}\label{tkbound1}
	|T_k(H)| \leq (r - 1)|\partial H|.
	\end{equation}
\end{proposition}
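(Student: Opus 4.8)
The claim has two parts: first that $g(\vb{v}_k) = (v_k,\dots,v_{k+r-2})$ is an injection from $T_k(H)$ into the set of cyclically ordered $(r-1)$-tuples coming from $\partial H$, and second that this set has size at most $(r-1)|\partial H|$, whence \eqref{tkbound1}. The second part is the routine bookkeeping: each $(r-1)$-element edge $S \in \partial H$ admits exactly $r-1$ cyclic orderings (rotations) that are compatible with the fixed cyclic ordering $\prec$ of $\vb{\Omega}_n$ — choosing which element of $S$ is the ``first'' — so the number of cyclically ordered elements of $\partial H$ is exactly $(r-1)|\partial H|$. Also, the codomain is correct: if $\vb{v}_k$ is the end of a $k$-zigzag then $v_k v_{k+1}\cdots v_{k+r-2}$ is contained in an edge of $H$ (it is $r-1$ of the $r$ vertices of the last edge $v_{k-1}v_k\cdots v_{k+r-2}$), so $\{v_k,\dots,v_{k+r-2}\} \in \partial H$, and the tuple $g(\vb{v}_k)$ is one of its cyclic orderings.

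The heart of the matter is injectivity of $g$, i.e. recovering $v_{k-1}$ from the end $(v_k,\dots,v_{k+r-2})$ of a $k$-zigzag that lies in $T_k(H)$. The plan is to use the defining property of $T_k(H)$, namely $X(\vb{v}_k) = \emptyset$. Recall $X(\vb{v}_k) = \{v \in I(\vb{v}_k) : v v_k v_{k+1}\cdots v_{k+r-2} \in H\}$, where $I(\vb{v}_k) = [v_{k-1},v_k]$ if $k$ is odd and $[v_{k+r-2},v_{k-1}]$ if $k$ is even. So suppose $g(\vb{v}_k) = g(\vb{w}_k)$, i.e. $v_i = w_i$ for $k \le i \le k+r-2$; we must show $v_{k-1} = w_{k-1}$. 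Note $v_{k-1} \in I(\vb{v}_k)$ always (it is an endpoint of that segment), and $v_{k-1} v_k \cdots v_{k+r-2} \in H$ because this is exactly the last edge of the zigzag $\mathsf P$. Thus, \emph{if} $v_{k-1}$ happened to lie in the segment $I(\vb{w}_k)$, then $v_{k-1}$ would witness $X(\vb{w}_k) \ne \emptyset$, contradicting $\vb{w}_k \in T_k(H)$ — unless $v_{k-1} = w_{k-1}$ (the endpoint of $I(\vb{w}_k)$ is allowed, and indeed it is always in $X$). Symmetrically $w_{k-1}$ either lies outside $I(\vb{v}_k)$ or equals $v_{k-1}$.

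So the remaining task is to rule out the ``asymmetric'' configuration in which $v_{k-1} \notin I(\vb{w}_k)$ and $w_{k-1} \notin I(\vb{v}_k)$ while $v_{k-1} \ne w_{k-1}$. Here I would argue with the geometry of the fixed cyclic ordering. The segments $I(\vb{v}_k)$ and $I(\vb{w}_k)$ share the common boundary point $v_k = w_k$ (in the odd case) or $v_{k+r-2} = w_{k+r-2}$ (in the even case); in the odd case $I(\vb{v}_k) = [v_{k-1}, v_k]$ and $I(\vb{w}_k) = [w_{k-1}, v_k]$ are two arcs ending at the same point $v_k$ and traversed clockwise, so they are nested: one contains the other. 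Say $I(\vb{v}_k) \subseteq I(\vb{w}_k)$; then $v_{k-1} \in I(\vb{v}_k) \subseteq I(\vb{w}_k)$, so $v_{k-1} \in I(\vb{w}_k)$, forcing (as above) $v_{k-1} = w_{k-1}$. The even case is the mirror image, with the two arcs sharing the common left endpoint $v_{k+r-2}$. This nesting is precisely the fact exploited in the proof of Proposition~\ref{injection}, and it closes the argument. The one point to be careful about — and the only real obstacle — is making sure the ``closed vs. open endpoint'' conventions for the segment $[u,v]$ are handled consistently, so that the endpoint $v_{k-1}$ of $I(\vb{v}_k)$ genuinely lies in $I(\vb{w}_k)$ in the nesting case and hence genuinely qualifies as an element of $X(\vb{w}_k)$; since $v_{k-1}v_k\cdots v_{k+r-2}$ is literally an edge of $H$ and $v_{k-1}$ is the far endpoint of the arc $I(\vb{v}_k)$, which by nesting is an interior or boundary point of $I(\vb{w}_k)$, this works out, and injectivity follows.
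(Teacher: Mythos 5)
Your proof is correct and follows essentially the same route as the paper's: the paper likewise argues that if $g(\vb{v}_k)=g(\vb{w}_k)$ but $v_{k-1}\neq w_{k-1}$, then whichever of $v_{k-1},w_{k-1}$ lies in the other end's segment $I(\cdot)$ belongs to the corresponding set $X(\cdot)$, contradicting membership in $T_k(H)$ --- your nesting-of-arcs formulation is just a cleaner packaging of that case analysis. Your explicit treatment of the codomain, the count of $r-1$ compatible cyclic orderings per shadow edge, and the open-versus-closed endpoint convention (needed so that $X(\vb{v}_k)$ does not trivially contain $v_{k-1}$ itself) supplies details the paper leaves implicit.
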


\begin{proof}
	If $g(\vb{v}_k) = g(\vb{w}_k)$, then $w_i = v_i$ for $k \leq i \leq k + r - 2$. Suppose $v_{k - 1} \neq w_{k - 1}$. Then either
	$v_{k+r-2} \prec w_{k-1} \prec v_{k-1}$, and $v_{k-1} \in X(\vb{w}_k)$, or $v_{k-1} \prec w_{k-1} \prec v_k$, and $w_{k-1} \in X(\vb{v}_k)$. In either case,
	$\vb{v}_k \not \in T_k$ or $\vb{w}_k \not \in T_k$, a contradiction. So $v_{k - 1} = w_{k - 1}$, which implies $\vb{v}_k = \vb{w}_k$.
\end{proof}

\bigskip

\section{Proof of Theorem \ref{mainzigzag} on zigzags}\label{even}

The following theorem implies Theorem \ref{mainzigzag}, since if $H$ is an $n$-vertex $r$-uniform cgh not containing
a $k$-zigzag, then $S_k(H) = \emptyset$, and we always have $|\partial H| \leq {n \choose r - 1}$.

\begin{theorem}\label{zigzag}
	Let $k \geq 1$ and let $r \geq 2$ be even. Then for any $r$-uniform cgh $H$,
	\begin{equation}\label{skbound}
	|S_k(H)| \geq r|H| - (r - 1)(k - 1)|\partial H|.
	\end{equation}
\end{theorem}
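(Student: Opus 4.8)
The plan is to prove \eqref{skbound} by induction on $k$, using the two injections from Section~\ref{extend} to set up the inductive step. For the base case $k=1$, a $1$-zigzag $v_0 v_1 \dots v_{r-1}$ is just an edge of $H$ written in the cyclic order with a choice of starting vertex; one checks that every edge, read in cyclic order starting from any of its $r$ vertices, is the end of a $1$-zigzag, so $|S_1(H)| \geq r|H|$ (in fact equality holds after accounting for reflections, but the inequality is all we need), and the right-hand side of \eqref{skbound} is $r|H| - 0 = r|H|$ since $k-1 = 0$. So the base case holds.

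For the inductive step, assume \eqref{skbound} holds for $k$ and consider $k+1$. Combining Proposition~\ref{injection}, which gives $|S_{k+1}(H)| \geq |S_k(H) \setminus T_k(H)| = |S_k(H)| - |T_k(H)|$, with Proposition~\ref{injection2}, which gives $|T_k(H)| \leq (r-1)|\partial H|$, we obtain
\[
|S_{k+1}(H)| \;\geq\; |S_k(H)| - (r-1)|\partial H|.
\]
Now apply the induction hypothesis $|S_k(H)| \geq r|H| - (r-1)(k-1)|\partial H|$ to get
\[
|S_{k+1}(H)| \;\geq\; r|H| - (r-1)(k-1)|\partial H| - (r-1)|\partial H| \;=\; r|H| - (r-1)k|\partial H|,
\]
which is exactly \eqref{skbound} with $k$ replaced by $k+1$. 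This closes the induction.

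The only real content to verify carefully is the base case: one must confirm that for each edge $e = \{w_0, w_1, \dots, w_{r-1}\}$ of $H$ with $w_0 \prec w_1 \prec \dots \prec w_{r-1}$ in the cyclic order, and for each cyclic rotation, the resulting ordered $r$-tuple genuinely arises as the end $\vb{v}_1 = (v_0, v_1, \dots, v_{r-1})$ of a $1$-zigzag in the sense of Definition~\ref{defz} — i.e. that the single edge, suitably indexed, satisfies conditions (i) and (ii) vacuously with singleton segments $I_0 \prec I_1 \prec \dots \prec I_{r-1}$. Since each $I_j$ contains exactly one vertex for a $1$-zigzag, conditions (i) and (ii) impose nothing, so any edge indexed consistently with the cyclic order (starting from any vertex) works, giving $r$ distinct ends per edge. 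I expect this bookkeeping — making sure the count is exactly $r$ per edge and that distinct (edge, rotation) pairs give distinct ends — to be the main, if minor, obstacle; the inductive step itself is immediate from the propositions already proved. Finally, the deduction of Theorem~\ref{mainzigzag} is as indicated in the text: if $H$ has no $k$-zigzag then $S_k(H) = \emptyset$, so $0 \geq r|H| - (r-1)(k-1)|\partial H|$, and using $|\partial H| \leq \binom{n}{r-1}$ yields $|H| \leq \frac{(k-1)(r-1)}{r}\binom{n}{r-1}$.
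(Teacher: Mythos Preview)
Your proof is correct and follows essentially the same approach as the paper: induction on $k$, with the base case $|S_1(H)| \geq r|H|$ coming from the $r$ cyclic starting points of each edge, and the inductive step obtained by chaining Propositions~\ref{injection} and~\ref{injection2}. The paper's version is slightly terser but the content is identical.
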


\begin{proof} We prove (\ref{skbound}) by induction on $k$. Let $k = 1$ and $e \in H$. By Definition \ref{defz}(i), there are $r$ possible orderings
	of the vertices of $e$ giving a 1-zigzag:  having chosen the first vertex, the ordering of the remaining vertices of $e$ is determined. Therefore $|S_1(H)| \geq r|H|$. For the induction step, suppose $k \geq 1$ and (\ref{skbound}) holds.
	By (\ref{skbound1}) and (\ref{tkbound1}),
	\begin{eqnarray*}
		|S_{k+1}(H)| &\geq& |S_k(H) \backslash T_k(H)| \geq r|H| - (r - 1)(k - 1)|\partial H| - |T_k(H)| \\
		&\geq& r|H| - (r - 1)k|\partial H|.
	\end{eqnarray*}
	This proves (\ref{skbound}).
\end{proof}

\bigskip

\section{Stack-free constructions}\label{lb}

Let $k \geq 1$ and let $r \geq 2$ be even. A {\em $k$-stack}, denoted $\mathsf M_k^r$, consists of edges $\{v_{ir},v_{ir+1},\dots,v_{ir+r-1} : 0 \leq i < k\}$ where $v_0 v_1 \dots v_{(k - 1)r-1}$ is an $r$-uniform zigzag path; in other words we pick every $r$th edge from a zigzag path $\mathsf P_{(k-1)r + 1}^r$.
An example for $r = 4$ and $k = 7$ is shown below, where the extreme points on the perimeter form $\vb{\Omega}_{28}$.

\begin{figure}[!ht]
	\begin{center}
		\includegraphics[width=2.5in]{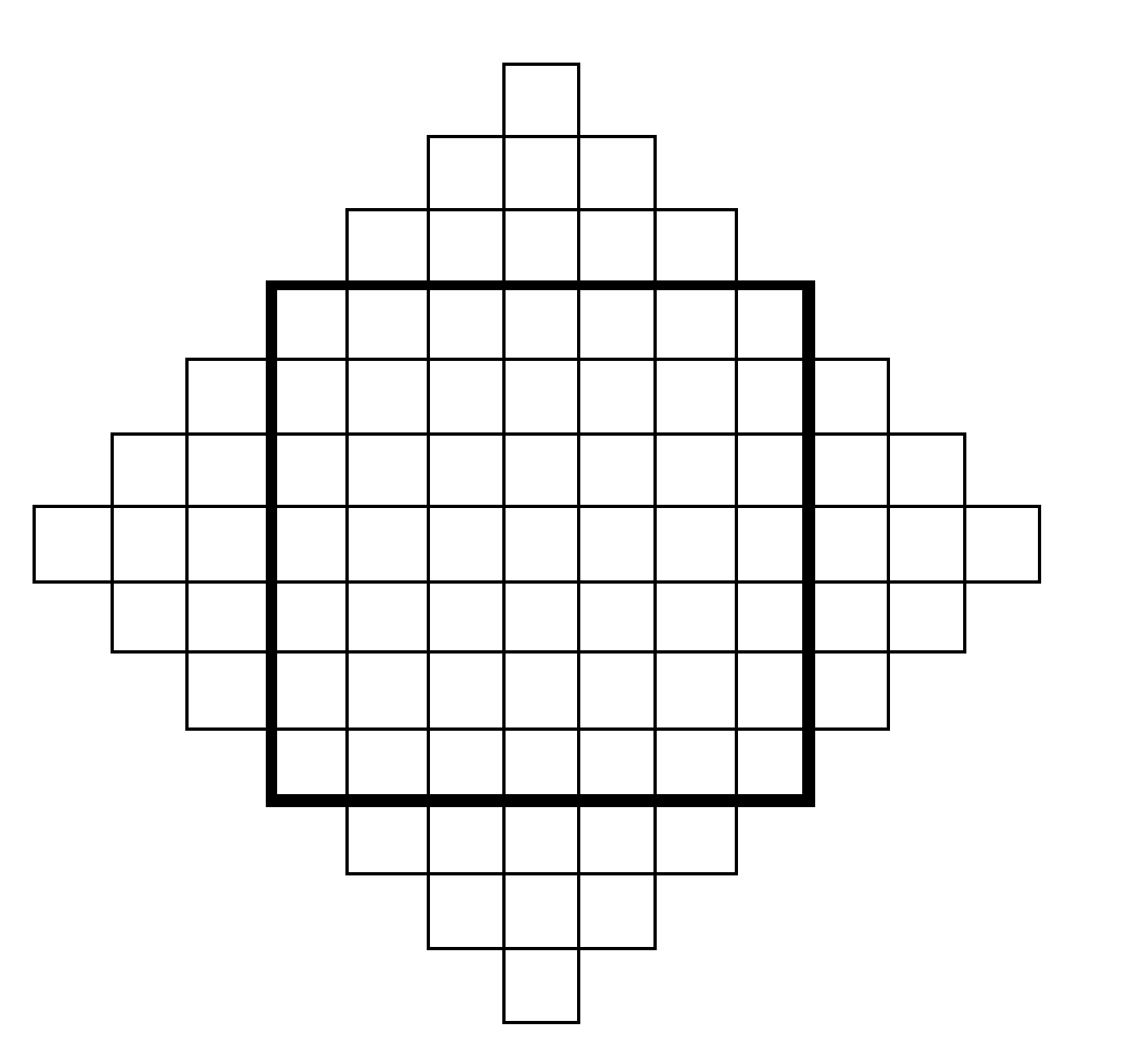}
		\begin{center}
			$\mathsf M_7^4$
		\end{center}
		
		\caption{Stack}
		\label{fig:stacks}
		
	\end{center}
\end{figure}

There is a simple construction of an $r$-uniform cgh with no $k$-stack when $k$ is odd with $(k - 1)(r - 1){n \choose r - 1} + O(n^{r - 2})$ edges.
If $k \geq 3$ is odd, let $H$ be the cgh consisting of $r$-sets $e$ from $\vb{\Omega}_n$ such that $\ell(u,v) \leq k - 1$ for some $u,v \in e$. It is straightforward to see that $|H| = (r - 1)(k - 1){n \choose r - 1} + O(n^{r - 2})$, and
$H$ contains no $k$-stack since the ``middle'' edge $e$ in the stack -- drawn in bold in Figure 2 -- has $\ell(u,v) \geq k$ for all $u,v \in e$.

\medskip

In this section, we extend this construction to all values of $k$, thereby proving the following theorem, which may be of independent interest. In particular, this construction does not contain $\mathsf P_{(k - 1)r + 1}^r$, and shows Theorem \ref{mainzigzag} is asymptotically tight for zigzags of length $1 \Mod{r}$.

\medskip

\begin{theorem}\label{stack}
	Let $k \geq 1$ and $r \geq 2$ be even. Then
	\[ \ex_{\circlearrowright}(n,\mathsf M_k^r) = (k - 1)(r - 1){n \choose r-1} + O(n^{r - 2}).\]
\end{theorem}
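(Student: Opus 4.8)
The plan is to establish matching upper and lower bounds, each within an $O(n^{r-2})$ error term. The lower bound follows the sketch already given in the text for odd $k$, but we must handle even $k$ as well. The key observation is that a $k$-stack $\mathsf M_k^r$ contains a ``middle'' edge, and the structure of a zigzag path forces that middle edge to have all pairs of its vertices far apart in the cyclic ordering. First I would make this precise: in the stack $\mathsf M_k^r = \{v_{ir},\dots,v_{ir+r-1} : 0 \le i < k\}$ arising from a zigzag path $\mathsf P^r_{(k-1)r+1}$, the $r$ classes $I_0 \prec I_1 \prec \dots \prec I_{r-1}$ of Definition \ref{defz} each receive roughly $k$ vertices of the underlying path, and consecutive edges of the stack advance ``inward'' from both ends. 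Consequently, if $e^\ast$ is the edge of the stack whose vertices sit closest to the centers of their respective intervals $I_j$ (i.e. the $\lceil k/2 \rceil$-th edge), then for every pair $u,v \in e^\ast$ lying in intervals $I_j, I_{j'}$ with $j < j'$, there are at least $\min(\lceil k/2\rceil, \dots)$ vertices of the path strictly between them on \emph{both} arcs, giving $\ell(u,v) \ge k-1$ with a bit of care about parity. I would therefore define $H$ to be the cgh of all $r$-sets $e \subseteq \vb{\Omega}_n$ such that $\ell(u,v) \le k-1$ for some pair $u,v \in e$; a routine count gives $|H| = (k-1)(r-1)\binom{n}{r-1} + O(n^{r-2})$, since an $(r-1)$-shadow set extends to roughly $2(k-1)$ edges of $H$ on each ``side'' but the two sides may overlap — the inclusion–exclusion correction is $O(n^{r-2})$. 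The main content is verifying $H$ contains no $k$-stack: this reduces to the claim that the middle edge $e^\ast$ of any copy of $\mathsf M_k^r$ has $\ell(u,v) \ge k$ for all $u,v \in e^\ast$, which I would prove by tracking how the zigzag's coordinates in each interval $I_j$ move monotonically (increasing for even $j$, decreasing for odd $j$ by Definition \ref{defz}) and counting the path-vertices trapped between the two endpoints of $e^\ast$ in each interval.

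For the upper bound, I would appeal directly to the machinery already developed. Since $\mathsf M_k^r$ is obtained by taking every $r$-th edge of the zigzag path $\mathsf P^r_{(k-1)r+1}$, any $r$-uniform cgh $H$ containing $\mathsf P^r_{(k-1)r+1}$ contains $\mathsf M_k^r$; hence $\ex_{\circlearrowright}(n, \mathsf M_k^r) \le \ex_{\circlearrowright}(n, \mathsf P^r_{(k-1)r+1})$. Applying Theorem \ref{mainzigzag} with $k$ replaced by $(k-1)r+1$ yields
\[
\ex_{\circlearrowright}(n,\mathsf M_k^r) \le \frac{((k-1)r+1-1)(r-1)}{r}\binom{n}{r-1} = (k-1)(r-1)\binom{n}{r-1},
\]
which already matches the lower bound up to the lower-order term — in fact with no error at all on this side. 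Combining with the construction above completes the proof.

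The hard part will be the lower-bound construction: first, proving that the middle edge of every stack is ``deep'' (all pairwise distances $\ge k$) requires carefully unwinding the zigzag definition and doing a parity-sensitive count of how many of the $(k-1)r$ path vertices lie between a given pair on each of the two arcs, and this count is slightly different for even versus odd $k$ — for even $k$ there are two equally-central edges and one must pick the right one. Second, one must confirm that $H$ as defined genuinely avoids $k$-stacks and not merely the particular stack coming from a ``straight'' zigzag; but this follows because the deepness property of the middle edge is a consequence purely of the zigzag ordering axioms, which any embedded copy of $\mathsf M_k^r$ must satisfy. The edge-count $|H| = (k-1)(r-1)\binom{n}{r-1} + O(n^{r-2})$ is then routine: fixing an $(r-1)$-subset $S$ of $\vb{\Omega}_n$, the number of vertices $w$ with $\ell(w,s) \le k-1$ for some $s \in S$ is $O(1)$, but the more efficient count fixes a pair realizing the small distance — there are $(r-1)$ choices for which shadow-vertex is the far endpoint and $2(k-1)$ choices of a close partner, then $\binom{n}{r-2}$-ish ways to complete, and the overcounting from edges with several close pairs is absorbed into $O(n^{r-2})$.
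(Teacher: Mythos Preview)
Your upper bound is exactly right and matches the paper's: since $\mathsf M_k^r \subseteq \mathsf P_{(k-1)r+1}^r$, Theorem \ref{mainzigzag} gives $\ex_{\circlearrowright}(n,\mathsf M_k^r)\le (k-1)(r-1)\binom{n}{r-1}$ with no error term.

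The gap is in your lower bound. You propose the cgh $H=\{e:\ell(u,v)\le k-1\text{ for some }u,v\in e\}$ for all $k$, relying on the claim that some middle edge $e^\ast$ of any embedded $k$-stack has $\ell(u,v)\ge k$ for every pair $u,v\in e^\ast$. This claim is false for even $k$. Take $r=4$, $k=4$, and place the $16$ stack vertices at consecutive positions of $\vb{\Omega}_n$, so the cyclic order is
\[
v_0,v_4,v_8,v_{12},\,v_{13},v_9,v_5,v_1,\,v_2,v_6,v_{10},v_{14},\,v_{15},v_{11},v_7,v_3.
\]
The two central edges are $\{v_4,v_5,v_6,v_7\}$ and $\{v_8,v_9,v_{10},v_{11}\}$. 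In the first, $\ell(v_5,v_6)=3=k-1$; in the second, $\ell(v_8,v_9)=3=k-1$. So \emph{both} candidate middle edges lie in your $H$, and in fact all four stack edges do. Your $H$ therefore contains a $k$-stack. (The same phenomenon already occurs at $k=2$: for $r=4$ both edges of a $2$-stack placed at consecutive positions have a pair at distance $1$.) The parity issue you flag is real, but it is not resolvable by ``picking the right one'' of two middle edges --- neither works.

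This is precisely why the paper's construction is more elaborate. It writes $H=\bigcup_{j=0}^{k-1}H_j$, where $H_0$ consists of all edges through a fixed vertex $0$, the layers $H_1,\dots,H_{k-2}$ are as in your construction (edges with some consecutive pair at distance exactly $j$), and the top layer $H_{k-1}$ is restricted: it uses only the $r/2-1$ \emph{interior} consecutive pairs $(v_{2h-1},v_{2h})$ for $1\le h<r/2$, and allows distance $k-1$ or $k$ there. The proof that $H$ is stack-free then looks at the \emph{two} middle edges $e,f$ simultaneously: vertex $0$ lies in at most one of them, and for the other one (with the later first vertex after $0$) the interior pairs $w_{2h-1}w_{2h}$ are forced to have length at least $k+1$, because $k/2$ disjoint pairs from other stack edges are trapped in each such arc. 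The extra room gained by dropping one consecutive pair from the top layer is exactly compensated by allowing distances up to $k$ and by the layer $H_0$, so the edge count still comes out to $(k-1)(r-1)\binom{n}{r-1}+O(n^{r-2})$.
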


\begin{proof} We have $\ex_{\circlearrowright}(n,\mathsf M_k^r) \leq (k -1)(r - 1){n \choose r - 1}$ from Theorem \ref{mainzigzag}. The main part of the proof is the construction of an $r$-uniform cgh with
	$(k - 1)(r - 1){n \choose r - 1} + O(n^{r - 2})$ edges that does not contain a $k$-stack. It will be convenient to let $\vb{\Omega}_n = \{0,1,2,\dots,n-1\}$ in cyclic order, and view our edges as ordered $r$-tuples $(v_{0},v_{1},\ldots,v_{r-1})$ where
	$0\leq v_0 < v_1 < \ldots < v_{r-1} \leq n-1$.
	
	\medskip
	
	Our construction $H = H(n,r,k)$ has the form $H=\bigcup_{j=0}^{k-1} H_j$, where
	
	\begin{tabular}{cp{5.5in}}
		(i) & $H_0=\{(v_0,v_{1},v_{2},\ldots,v_{r-1})\, : \; v_0 = 0\}$,\\
		(ii) & $H_j = \bigcup_{h = 0}^{r-1} \{(v_0,v_1,\ldots,v_{r-1}) \not \in H_0 \, : \, \ell(v_h,v_{h+1}) = j\}$ for $1 \leq j \leq k - 2$, \\
		(iii) & $H_{k-1}= \bigcup_{h = 1}^{r/2 - 1} \{(v_{0},v_{1},\ldots,v_{r-1}) \not \in H_0 \, : \; \ell(v_{2h - 1},v_{2h}) \in \{k-1,k\} \}$.
	\end{tabular}
	
	\medskip
	
	{\bf Claim 1.}\label{size} $|H| = (k-1)(r-1){n\choose r-1} + O(n^{r - 2})$.
	
	\smallskip
	
	{\bf Proof.} By definition, $|H_0|={n-1\choose r-1}$, and $H_0 \cap \bigcup_{j=1}^{k-1} H_j = \emptyset$. For any $j : 1\leq j\leq k-2$, as $n \rightarrow \infty$,
	\[ |H_j| = (n-1){n-j-1\choose r-2} + O(n^{r - 2}) =  (r-1){n\choose r-1} + O(n^{r - 2})\]
	and also
	\[ |H_{k-1}| = 2(r/2 - 1){n-1\choose r-1} + O(n^{r - 2}) = (r-2){n\choose r-1} + O(n^{r - 2}).\]
	
	If $1 \leq i < j \leq k - 1$, $|H_i\cap H_j|=O(n^{r-2})$. By inclusion-exclusion,
	\[ |H| \geq |H_0| + \sum_{j = 1}^{k - 1} |H_j| - \sum_{i < j} |H_i \cap H_j| = (k - 1)(r - 1){n \choose r - 1} + O(n^{r - 2}).\]
	This proves the claim.\qed
	
	\medskip
	
	{\bf Claim 2.} {\em $\mathsf M_k^r \not \subseteq H$.}
	
	\smallskip
	
	{\bf Proof.} Suppose $H$ contains a $k$-stack. The key is to consider the ``middle'' two edges of the stack, say
	$e$ and $f$. Then the vertex 0 is in at most one of $e$ and $f$. If $v_0$ is the first vertex of $e$ and $w_0$ is the first vertex
	of $f$ after 0 in the clockwise direction, then without loss of generality we may assume $v_0 < w_0$. Now consider the pairs
	$w_1 w_2$, $w_3 w_4$ up to $w_{r-1}w_{r}$ which are in $f$. We claim all of these pairs have length at least $k + 1$, contradicting the definition of $H$,
	since $f$ would then not be a member of $H$. To see the claim, fix $h : 1 \leq h < r/2$. Notice that there are $k/2$ edges of the stack (excluding $f$) which contain a pair
	of vertices in the segment $[w_{2h-1},w_{2h}]$, and these pairs are vertex disjoint.
	However, then $\ell(w_{2h-1},w_{2h}) \geq 2(k/2 + 1) - 1 = k + 1$, and this holds for $1 \leq h < r/2$.
\end{proof}


\section{Proof of Theorem \ref{tight-path} on tight paths}\label{proof-tightpath}

{\bf Proof for $r$ even.} Let $H$ be an $n$-vertex $r$-graph with no tight $k$-path, where $r$ is even. We aim to prove the following, which
gives Theorem \ref{tight-path} for $r$ even:

\begin{equation}\label{hbound}
|H| \leq \frac{k - 1}{2}|\partial H|.
\end{equation}

We follow the approach used to prove  Theorem \ref{mainzigzag} on a carefully chosen subgraph $G$ of $H$. This subgraph is defined via a random partition of $V(H)$:
let $s = r/2$ and let $\chi : V(G) \rightarrow \{0,1,\dots,s-1\}$ be a random $s$-coloring of the vertices of $H$ such that $\mathrm{P}(\chi(v) = i) = 1/s$ for $0 \leq i \leq s - 1$ and each vertex $v \in V(H)$,
and such that vertices are colored independently. Let $B_i = \{v \in V(H) : \chi(v) = i\}$, and define the following (random) subgraph of $H$:
\[ G = \{e \in H : |e \cap B_i| = 2 \mbox{ for }0 \leq i \leq s - 1\}.\]
In other words, each edge of $G$ has two vertices in each of the sets $B_i$.
For $0 \leq i \leq s - 1$, let
$$\partial_iG = \{e \in \partial G : |e \cap B_i| = 1\} \subset
\{e \in \partial H : |e \cap B_i| = 1, |e \cap B_j|=2 \hbox{ for } j \ne i\}
.$$ Then we have the following expected values:
\begin{equation}\label{tb}
\mathrm{E}(|G|) = \frac{r!}{2^s s^r} |H| \quad \mbox{ and } \quad \mathrm{E}(|\partial_i G|) \le  \frac{(r - 1)!}{2^{s-1}s^{r-1}}|\partial H|.
\end{equation}
The next step is to introduce some geometric structure on $G$. Let $\prec$ denote a cyclic ordering of the vertices of each of $B_0, B_1, \dots, B_{s - 1}$.

\begin{definition}[Good paths]\label{gp}
	We call a tight path  $v_0v_1\ldots v_{k+r-2}$ in $G$ {\em good} if
	\vspace{-0.1in}
	\begin{center}
		\begin{tabular}{lp{5.5in}}
			{\rm (i)} & for $0 \leq j < k + r - 2$, $v_j,v_{j + 1} \in B_i$ whenever $j \equiv 2i \Mod{r}$.\\
			{\rm (ii)} & the cyclic order in $B_i$ is always $v_j \prec v_{j+r} \prec v_{j+2r} \prec \ldots \prec v_{j+1+2r} \prec v_{j+1+r} \prec v_{j+1}$.
		\end{tabular}
	\end{center}
\end{definition}

An $r$-uniform good path with $k$ edges is shown in Figure 3, for $r = 6$ and $k = 4$.

\begin{figure}[!ht]
	\begin{center}
		\includegraphics[width=2.8in]{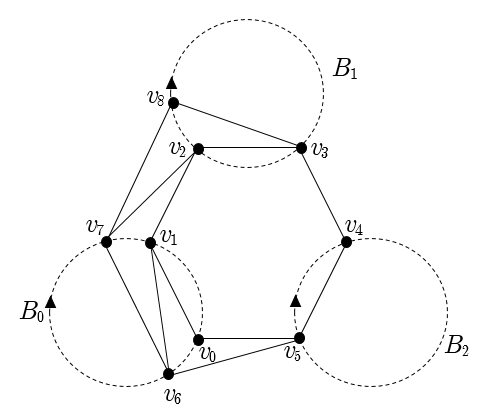}
		\caption{Good paths}
		\label{fig:good}
	\end{center}
\end{figure}

We now follow the ideas in Section \ref{extend}. By Definition \ref{gp}(i), $v_j \in B_i$ if and only if $i = h(j) = \lfloor j/2\rfloor \Mod{s}$.
Let $i = h(k - 1)$, so that $v_{k - 1} \in B_i$. We write $[u,v] = \{w \in B_i : u \prec w \prec v\}$.
Define $I(\vb{v}_k) = [v_{k - 1},v_k] \subseteq B_i$ if $k$ is odd and $I(\vb{v}_k) = [v_{k + r - 2},v_{k - 1}] \subseteq B_i$ if $k$ is even, and
\[
X(\vb{v}_k) = \{v \in I(\vb{v}_k) : vv_kv_{k+1}\dots v_{k+r-2} \in H\}
\]
Note that the definition of $X(\vb{v_k})$ is identical to that in Section \ref{extend} but with respect to the ordering $\prec$ of $B_i$, where $i = h(k - 1)$, and in particular, $I(\vb{v}_k), X(\vb{v}_k) \subseteq B_i$.
In Figure 3, $X(\vb{v}_4)$ consists of all verteices $v \in B_1$ clockwise from $v_8$ to $v_3$ such that $v_4 v_5 v_6 v_7 v_8 v \in G$.
Let $S_k(G)$ be the set of the ends of good $k$-paths in $G$, and let $T_k(G) = \{\vb{v}_k \in S_k(H) : X(\vb{v}_k) = \emptyset\}$.

\medskip

{\bf Claim 1.} {\em For $k \geq 1$, if $i = h(k - 1)$, then}
\begin{equation}\label{tk2}
|T_k(G)| \leq 2^{s-1}|\partial_i G|.
\end{equation}

{\bf Proof.} If $\vb{v}_k \in S_k(G)$, then $v_{k - 1} \in B_i$ since $i = h(k - 1)$. For $\vb{v}_k \in T_k(G)$, define
$g(\vb{v}_k) = (v_k,v_{k + 1},\dots$, $v_{k + r - 2})$. Then $v_k v_{k + 1} \dots v_{k + r - 2} \in \partial_iG$ and
$(v_k,v_{k + 1},\dots,v_{k + r - 2})$ is uniquely determined by specifying the order of the pairs $\{v_k,v_{k + 1},\dots,v_{k + r - 2}\} \cap B_j$
for each $j \neq i$. Therefore $g(\vb{v}_k)$ injectively maps elements of $T_k(G)$ to ordered elements of $\partial_i G$, where each element of $\partial_iG$ is
ordered in $2^{s-1}$ ways. We conclude $|T_k(G)| \leq 2^{s-1}|\partial_i G|$. \qed

\medskip

{\bf Claim 2.} {\em For $k \geq 1$, }
\begin{equation}\label{induction}
|S_k(G)| \geq 2^s |G| - 2^{s-1} \sum_{i = 0}^{k-2} |\partial_{h(i)}G|.
\end{equation}

{\bf Proof.} For $k = 1$, we observe for $e \in G$, there are two ways to label the pair $e \cap B_i$ for each $i \in [s]$, and therefore $|S_1(G)| \geq 2^s |G|$.
Suppose (\ref{induction}) holds for some $k \geq 1$. Then we copy the proofs of Propositions \ref{obs} and \ref{injection} to obtain
$|S_{k + 1}(G)| \geq |S_k(G) \backslash T_k(G)|$. By the induction hypothesis (\ref{induction}) and Claim 1,
\begin{eqnarray*}
	|S_{k+1}(G)| \; \; \geq \; \;  |S_k(G) \backslash T_k(G)| &\geq& 2^s |G| - 2^{s-1} \sum_{i = 0}^{k-2} |\partial_{h(i)}G| - |T_k(G)| \\
	&\geq& 2^s |G| - 2^{s-1} \sum_{i = 0}^{k - 1} |\partial_{h(i)}G|.
\end{eqnarray*}
This completes the induction step and proves (\ref{induction}). \qed

\medskip

{\bf Proof of (\ref{hbound}).} Finally we prove (\ref{hbound}). Taking expectations on both sides of (\ref{induction}), and using (\ref{tb}) and the linearity of expectation:
\begin{equation}\label{expect}
\mathrm{E}(|S_k(G)|) \geq 2^s \mathrm{E}(|G|) - 2^{s-1} \sum_{i = 0}^{k - 2} \mathrm{E}(|\partial_{h(i)}G|) \geq \frac{r!}{s^r} |H| -  \frac{(r - 1)!(k - 1)}{s^{r-1}} |\partial H|.
\end{equation}
Since $G \subseteq H$ has no tight $k$-path, $S_k(G) = \emptyset$. Using this in (\ref{expect}), we obtain (\ref{hbound}). \qed

\medskip

{\bf Proof for $r$ odd.} Let $H$ be an $n$-vertex $r$-graph containing no tight $k$-path. We aim to show
\begin{equation}\label{hbound2}
|H| \leq  \frac{1}{2} \Bigl(k + \Big\lfloor \frac{k-1}{r} \Big\rfloor\Bigr)|\partial H|.
\end{equation}
To prove (\ref{hbound2}), we reduce the case $r$ is odd to the case $r$ is even, and apply (\ref{hbound}) from the last proof.
Form the $(r + 1)$-graph $H^+$ by adding a set $X$ of vertices to $V(H)$, and let $H^+ = \{\{x\} \cup e : x \in X, e \in H\}$. It is convenient to
let $\phi(\ell) = \lceil (\ell + r)/(r + 1)\rceil$ for $\ell \geq 1$.

\medskip

It is straightforward to check that if $P = v_0 v_1 \dots v_{\ell + r - 1}$ is a tight $\ell$-path in $H^+$, then $|V(P) \cap X| \leq \phi(\ell)$. In addition, the sequence of vertices $v_i \in V(P) \backslash X$
in increasing order of subscripts forms a tight path in $H$ of length at least $\ell + 1 - \phi(\ell)$. Setting $\ell = k + \lfloor (k - 1)/r \rfloor + 1$, we
have $\ell + 1 - \phi(\ell) = k$, and therefore $H^+$ has no tight $\ell$-path. By (\ref{hbound}) applied to $H^+$,
\[ |H^+| \leq \frac{\ell - 1}{2} |\partial H^+|.\]
Since $|H^+| = |X| |H|$ and $|\partial H^+| = |X||\partial H| + |H|$, we find
\[ |X||H|\leq \frac{\ell - 1}{2} |X||\partial H| + \frac{\ell - 1}{2}|H|.\]
Choosing $|X| > (\ell - 1)|H|/2$ and dividing by $|X|$, we obtain $|H| \leq (\ell - 1)|\partial H|/2$. Since $(\ell - 1)/2 = (k + \lfloor (k - 1)/r \rfloor)/2$,
this proves (\ref{hbound2}). \qed

\section{Concluding remarks}

$\bullet$ It turns out using Steiner systems with
arbitrarily large block sizes~\cite{GKLO,Keevash}) that for each fixed $k,r \geq 2$, both of the following limits exist:
\[ z(k,r) := \lim_{n \rightarrow \infty} \frac{\ex_{\circlearrowright}(n,\mathsf P_k^r) }{{n \choose r - 1}} \quad \quad \mbox{ and } \quad \quad
p(k,r) := \lim_{n \rightarrow \infty} \frac{\ex(n,P_k^r) }{{n \choose r - 1}}.\]
The first limit is determined by Theorem~\ref{mainzigzag} and the construction
in Section \ref{lb} for $k \equiv 1 \Mod{r}$, and for $r \geq 4$ the problem is wide open in all remaining cases, even for $k = 2$.

\medskip

$\bullet$ For $k \leq r + 1$, an improvement over Theorem \ref{tight-path} is possible, slightly improving the results of Patk\'{o}s~\cite{Patkos}: we prove by induction on $r$ that
if $r \geq k - 1$, the
\[ \ex(n,P_k^r) \leq \frac{k^2}{2r} {n \choose r - 1}.\]
If $r = k - 1$, this follows from Theorem \ref{tight-path}. Suppose $r \geq k$ and we have proved the bound for $(r - 1)$-graphs. Let $H$
be an $r$-graph with no tight $k$-path and pick a vertex $v \in V(H)$ contained in at least $r|H|/n$ edges of $H$.
Consider the link hypergraph $H_v = \{e \in \partial H : e \cup \{v\} \in H\}$. Then
$H_v$ has no tight $k$-path, otherwise adding $v$ to each edge we get a tight $k$-path in $H$. By induction,
\[ \frac{r|H|}{n} \leq |H_v| \leq \frac{k^2}{2(r - 1)} {n - 1 \choose r - 2} \leq \frac{k^2}{2n} {n \choose r - 1} \]
and this implies $|H| \leq \frac{k^2}{2r} {n \choose r - 1}$, as required.

\medskip

$\bullet$ It is possible when $r \geq 3$ is odd to obtain a very slight improvement over Theorem \ref{tight-path}, namely
\[ \ex(n,P_k^r) \leq \frac{1}{r}(\sqrt{a} + \sqrt{b})^2 {n \choose r - 1}\]
where $a = \lfloor (k - 1)/r \rfloor$ and $b = (r - 1)(k - 1 - a)/2$ and $n$ is sufficiently large. For the purpose of comparison,
we obtain
\[
p(k,r) \leq k \cdot \Bigl(\frac{1}{2} + \frac{\sqrt{2} - 1}{r} + c\Bigr)  \]
where $c = O(r^{-2})$. For $r = 3$, we find that the upper bound is at most $\frac{1}{9}(3 + \sqrt{8})k\cdot {n \choose 2}$.

\medskip

$\bullet$ The proof in Section 5 shows that if $s = r/2$, $n$ is a multiple of $s$, and $G$ is an $n$-vertex $r$-graph such that $V(G)$ is partitioned into sets $B_0,B_1,\dots,B_{s-1}$ with $|B_i| = n/s$ and
$|e \cap B_i| = 2$ for $0 \leq i < s$ and every edge $e \in G$, then $|G| \leq 2^{s - 1} (k - 1)(n/r)^{r-1}$, and this is asymptotically tight if $k \equiv 1 \Mod{r}$. Indeed, let $B_0,B_1,\dots,B_{s-1}$ be disjoint sets of
size $n/s$, and let $A_i \subset B_i$ have size $(k - 1)/r$. Then let $G_i$ consist of all $r$-sets with one vertex in $A_i$, one vertex in $B_i \backslash A_i$,
and two vertices in each $B_j \backslash A_j$ for $0 \leq j < s, j \neq i$. Let $G = \bigcup_{i = 0}^{s-1} G_i$. Then $|e \cap f| \leq r - 2$ for $e \in G_i$ and $f \in G_j$ with $i \neq j$, so if $G$
contains a tight $k$-path, then the tight $k$-path is contained in some $G_i$. However, $A_i$ is a transversal of each $G_i$, so $G_i$ cannot contain a tight $k$-path. Therefore $G$ has no tight $k$-path, and furthermore
\[ |G| = \sum_{i=0}^{s-1}|G_i| = s\frac{k-1}{r}\frac{n}{s} {n/s \choose 2}^{s-1} + O(n^{r-2}) = 2^{s - 1}(k - 1) \Bigl(\frac{n}{r}\Bigr)^{r - 1} + O(n^{r - 2}).\]

\medskip

$\bullet$ In forthcoming work, we consider extremal problems for various other analogs of paths and matchings in the setting of convex geometric hypergraphs,
having considered only zigzag paths and stacks of even uniformity in this paper.


\section*{Acknowledgments} 
The last author would like to thank Gil Kalai for stimulating discussions occuring at the Oberwolfach Combinatorics Conference, 2017.
This research was partly conducted during AIM SQuaRes (Structured Quartet Research Ensembles) workshops, and we gratefully acknowledge the support of AIM. The authors are also grateful to a referee for  carefully reading  the paper and providing many valuable comments that improved the presentation.

\bibliographystyle{amsplain}



\begin{aicauthors}
\begin{authorinfo}[furedi]
		Zolt\'an F\" uredi \\
	Alfr\' ed R\' enyi Institute of Mathematics \\
	Hungarian Academy of Sciences \\
	Re\'{a}ltanoda utca 13-15. \\
	H-1053, Budapest, Hungary. \\
	zfuredi\imageat{}gmail\imagedot{}com
\end{authorinfo}

\begin{authorinfo}[jiang]
	 Tao Jiang \\
Department of Mathematics \\
 Miami University \\
 Oxford, OH 45056, USA. \\
 jiangt\imageat{}miamioh\imagedot{}edu
\end{authorinfo}
	
\begin{authorinfo}[kostochka]
 Alexandr Kostochka \\
University of Illinois at Urbana--Champaign \\
Urbana, IL 61801 \\
and Sobolev Institute of Mathematics \\
Novosibirsk 630090, Russia. \\
kostochk\imageat{}math\imagedot{}uiuc\imagedot{}edu
\end{authorinfo}

\begin{authorinfo}[mubayi]
 Dhruv Mubayi \\
Department of Mathematics, Statistics \\
and Computer Science \\
University of Illinois at Chicago \\
Chicago, IL 60607. \\
 mubayi\imageat{}uic\imagedot{}edu
\end{authorinfo}

\begin{authorinfo}[verstraete]
	Jacques Verstra\"ete \\
Department of Mathematics \\
University of California at San Diego \\
9500 Gilman Drive, La Jolla, California 92093-0112, USA. \\
jverstra\imageat{}math\imagedot{}ucsd\imagedot{}edu.
\end{authorinfo}

\end{aicauthors}

\end{document}